\numberwithin{equation}{section}
\newtheorem{thm}{Theorem}[section]
\newtheorem{prop}[thm]{Proposition}
\newtheorem{lemm}[thm]{Lemma}
\newtheorem{cor}[thm]{Corollary}
\theoremstyle{remark}
\newtheorem{rem}{Remark}[section]
\newtheorem{defn}{Definition}
\newcommand{\BBB}{\mathbb}
\newcommand{\R}{{\BBB R}}
\newcommand{\Z}{{\BBB Z}}
\newcommand{\N}{{\BBB N}}
\newcommand{\C}{{\BBB C}}
\newcommand{\ZZ}{\mathcal{Z}}
\newcommand{\FT}{\mathcal{F}}
\newcommand{\ee}{\mbox{\boldmath $1$}}
\newcommand{\p}{\partial}
\newcommand{\kuuhaku}{\text{}}
\newcommand{\F}{\mathcal{F}}
\title[Well-posedness and scattering for DNLS]{
}
\author[H. Hirayama]{
}
\address[H. Hirayama]{
}
\email[H. Hirayama]{m08035f@math.nagoya-u.ac.jp}
\subjclass[2010]{35Q55, 35B65}
\keywords{Schr\"odinger equation, well-posedness, Cauchy problem, scaling critical, Multilinear estimate, bounded $p$-variation}
\begin{document}
\begin{center}
{\bf \normalsize WELL-POSEDNESS  AND SCATTERING FOR 
NONLINEAR SCHR\"ODINGER EQUATIONS WITH A DERIVATIVE NONLINEARITY 
AT THE SCALING CRITICAL REGULARITY}

\smallskip \bigskip {\sc Hiroyuki Hirayama}

\smallskip {\footnotesize Graduate School of Mathematics, Nagoya University\\
Chikusa-ku, Nagoya, 464-8602, Japan}
\end{center}
\vspace{-2ex}
\begin{abstract}
In the present paper, we consider the Cauchy problem of nonlinear 
Schr\"odinger equations with a derivative nonlinearity which depends only on $\overline{u}$. 
The well-posedness of the equation at the scaling subcritical regularity was proved by 
A. Gr\"unrock (2000). We prove the well-posedness of the equation and the scattering for the solution 
at the scaling critical regularity  
by using $U^{2}$ space and $V^{2}$ space which are applied to prove 
the well-posedness and the scattering for KP-II equation at the scaling critical regularity by Hadac, Herr and Koch (2009).  
\end{abstract}
\maketitle
\setcounter{page}{001}


\section{Introduction\label{intro}}
We consider the Cauchy problem of the nonlinear Schr\"odinger equations:
\begin{equation}\label{mDNLS}
\begin{cases}
\displaystyle (i\p_{t}+\Delta )u=\p_{k}(\overline{u}^{m}),\hspace{2ex}(t,x)\in (0,\infty )\times \R^{d} \\
u(0,x)=u_{0}(x),\hspace{2ex}x\in \R^{d}
\end{cases}
\end{equation}
where $m\in \N$, $m\geq 2$, $1\leq k\leq d$, $\p_{k}=\partial /\partial x_{k}$ and the unknown function $u$ is $\C$-valued. 
(\ref{mDNLS}) is invariant under the following scaling transformation:
\[
u_{\lambda}(t,x)=\lambda^{-1/(m-1)}u(\lambda^{-2}t,\lambda^{-1}x), 
\]
and the scaling critical regularity is $s_{c}=d/2-1/(m-1)$. 
The aim of this paper is to prove the well-posedness and the scattering for the solution of (\ref{mDNLS}) 
in the scaling critical Sobolev space.

First, we introduce some known results for related problems. 
The nonlinear term in (\ref{mDNLS}) contains a derivative. 
A derivative loss arising from the nonlinearity makes the problem difficult. 
In fact, Mizohata (\cite{Mi85}) proved that a necessary condition for the $L^{2}$ well-posedness of the problem:
\[
\begin{cases}
i\partial_{t}u-\Delta u=b_{1}(x)\cdot \nabla u,\ t\in \R ,\ x\in \R^{d},\\
u(0,x)=u_{0}(x),\ x\in \R^{d}
\end{cases}
\]
is the uniform bound
\[
\sup_{x\in \R^{n},\omega \in S^{n-1},R>0}\left| {\rm Re}\int_{0}^{R}b_{1}(x+r\omega )\cdot \omega dr\right| <\infty.
\]
Furthermore, Christ (\cite{Ch}) proved that the flow map of the Cauchy problem:
\begin{equation}\label{1dqdnls}
\begin{cases}
i\partial_{t}u-\partial_{x}^{2}u=u\partial_{x}u,\ t\in \R,\ x\in \R,\\
u(0,x)=u_{0}(x),\ x\in \R
\end{cases}
\end{equation}
is not continuous on $H^{s}$ for any $s\in \R$. 
While, Ozawa (\cite{Oz98}) proved that the local well-posedness of (\ref{1dqdnls}) in 
the space of all function $\phi \in H^{1}$ satisfying the bounded condition
\[
\sup_{x\in \R}\left|\int_{-\infty}^{x}\phi \right|<\infty .
\]
Furthermore, he proved that if the initial data $\phi$ satisfies some condition, then 
the local solution can be extend globally in time and the solution scatters. 
For the Cauchy problem of the one dimensional derivative Schr\"odinger equation:
\begin{equation}\label{1d_dnls}
\begin{cases}
i\partial_{t}u+\partial_{x}^{2}u=i\lambda \partial_{x}(|u|^{2}u),\ t\in \R,\ x\in \R,\\
u(0,x)=u_{0}(x),\ x\in \R,
\end{cases}
\end{equation}
Takaoka (\cite{Tak99}) proved the local well-posedness in $H^{s}$ for $s\geq 1/2$ by using the gauge transform. 
This result was extended to global well-posedness (\cite{CKSTT01}, \cite{CKSTT02}, \cite{MWX11}, \cite{Tak01}). 
While, ill-posedness of (\ref{1d_dnls}) was obtained for $s<1/2$ (\cite{BL01}, \cite{Tak01}). 
Hao (\cite{Ha07}) considered the Cauchy problem:
\begin{equation}\label{1d_dnls_k}
\begin{cases}
i\partial_{t}u-\partial_{x}^{2}u+i\lambda |u|^{k}\partial_{x}u,\ t\in \R,\ x\in \R,\\
u(0,x)=u_{0}(x),\ x\in \R
\end{cases}
\end{equation}
for $k\geq 5$ and obtained local well-posedness in $H^{1/2}$. 
For more general problem:
\begin{equation}\label{genqdnls}
\begin{cases}
i\partial_{t}u-\Delta u=P(u,\overline{u},\nabla u,\nabla \overline{u}),\ t\in \R,\ x\in \R^{d},\\
u(0,x)=u_{0}(x),\ x\in \R^{d},\\
P\ {\rm is\ a\ polynomial\ which\ has\ no\ constant\ and\ linear\ terms},
\end{cases}
\end{equation}
there are many positive results for the well-posedness 
in the weighted Sobolev space (\cite{Be06}, \cite{Be08}, \cite{Chi95}, \cite{Chi99}, \cite{KPV98}, \cite{St07}). 
Kenig, Ponce and Vega (\cite{KPV98}) also obtained that (\ref{genqdnls}) 
is locally well-posed in $H^{s}$ (without weight)  for large enough $s$ when $P$ has no quadratic terms.

The Benjamin--Ono equation:
\begin{equation}\label{BOeq}
\partial_{t}u+H\partial_{x}^{2}u=u\partial_{x}u,\ (t,x)\in \R \times \R
\end{equation}
is also related to the quadratic derivative nonlinear Schr\"odinger equation. 
It is known that the flow map of (\ref{BOeq}) is not uniformly continuous 
on $H^{s}$ for $s>0$ (\cite{KT05}). 
But the Benjamin--Ono equation has better structure than the equation (\ref{1dqdnls}). 
Actually,  
Tao (\cite{Ta04}) proved that (\ref{BOeq}) is globally well-posed in $H^{1}$ 
by using the gauge transform. 
Furthermore, Ionescu and Kenig (\cite{IK07}) proved that (\ref{BOeq}) is globally well-posed in 
$H_{r}^{s}$ for $s\geq 0$, where $H_{r}^{s}$ is the Banach space of the all real valued function $f\in H^{s}$.  

Next, we introduce some known results for (\ref{mDNLS}). 
Gr\"unrock (\cite{Gr}) proved that (\ref{mDNLS}) is locally well-posed in $L^{2}$ when $d=1$, $m=2$ and 
in $H^{s}$ for $s>s_{c}$ when $d\geq 1$, $m+d\geq 4$. 
Recently, the author (\cite{Hi}) proved that (\ref{mDNLS}) with $d\geq 2$, $m=2$ is globally well-posed for small data in $H^{s_{c}}$ (also in $\dot{H}^{s_{c}}$) 
and the solution scatters. The results are an extension of the results by Gr\"unrock (\cite{Gr}) for $d\geq 2$, $m=2$.  
The main results in this paper are an extension of the results by Gr\"unrock (\cite{Gr}) for $d\geq 1$, $m\geq 3$. 

Now, we give the main results in the present paper. 
For a Banach space $H$ and $r>0$, we define $B_r(H):=\{ f\in H \,|\, \|f\|_H \le r \}$. 
\begin{thm}\label{wellposed_1}Assume $d\geq 1$, $m\geq 3$. \\
{\rm (i)} The equation {\rm (\ref{mDNLS})} is globally well-posed for small data in $\dot{H}^{s_{c}}$. 
More precisely, there exists $r>0$ such that for all initial data $u_{0}\in B_{r}(\dot{H}^{s_{c}})$, there exists a solution
\[
u\in \dot{Z}_{r}^{s_{c}}([0,\infty ))\subset C([0,\infty );\dot{H}^{s_{c}})
\]
of {\rm (\ref{mDNLS})} on $(0, \infty )$. 
Such solution is unique in $\dot{Z}_{r}^{s_{c}}([0,\infty ))$ which is a closed subset of $\dot{Z}^{s_{c}}([0,\infty ))$ {\rm (see Definition~\ref{YZ_space} and (\ref{Zr_norm}))}. 
Moreover, the flow map
\[
S_{+}:B_{r}(\dot{H}^{s_{c}})\ni u_{0}\mapsto u\in \dot{Z}^{s_{c}}([0,\infty ))
\]
is Lipschitz continuous. \\
{\rm (ii)} The statement in {\rm (i)} remains valid if we replace the space $\dot{H}^{s_{c}}$, $\dot{Z}^{s_{c}}([0,\infty ))$ and 
$\dot{Z}_{r}^{s_{c}}([0,\infty ))$ 
by $H^{s}$, $Z^{s}([0,\infty ))$ and $Z_{r}^{s}([0,\infty ))$ for $s\geq s_{c}$.  
\end{thm}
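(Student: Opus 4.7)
The plan is to construct a solution via contraction mapping applied to the Duhamel formulation
\[
\Phi(u)(t) = e^{it\Delta}u_{0} - i\int_{0}^{t} e^{i(t-s)\Delta}\p_{k}(\overline{u}^{m})(s)\,ds
\]
in a closed ball of the adapted function space $\dot{Z}^{s_{c}}([0,\infty))$, which is built from the Hadac--Herr--Koch $U^{2}_{\Delta}$, $V^{2}_{\Delta}$ spaces and embeds continuously into $C([0,\infty);\dot{H}^{s_{c}})$. First I would record the two ingredients inherited for free from this setup: the linear estimate $\|e^{it\Delta}u_{0}\|_{\dot{Z}^{s_{c}}}\lesssim \|u_{0}\|_{\dot{H}^{s_{c}}}$, and the fact that the Duhamel operator is bounded from an appropriate dual-type norm $\dot{Y}^{s_{c}}$ into $\dot{Z}^{s_{c}}$ via the standard $V^{2}$-duality. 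This reduces everything to proving a single nonlinear estimate.

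The heart of the proof is the $m$-linear, scaling-critical bound
\[
\left\| \int_{0}^{t} e^{i(t-s)\Delta}\p_{k}\bigl( \overline{u_{1}}\cdots \overline{u_{m}}\bigr)(s)\,ds \right\|_{\dot{Z}^{s_{c}}} \lesssim \prod_{j=1}^{m} \|u_{j}\|_{\dot{Z}^{s_{c}}},
\]
which I would establish by $V^{2}$-duality, Littlewood--Paley decomposition of the $u_{j}$ and of a test function $v$ (dual to $\dot{Z}^{s_{c}}$), and dyadic summation. On each frequency piece one estimates a multilinear space-time integral by transferring bilinear or $L^{p}_{t,x}$ Strichartz estimates from $e^{it\Delta}$ to the $U^{p}_{\Delta}$ atoms. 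The derivative $\p_{k}$ produces a factor comparable to the largest output frequency; this loss must be absorbed by a frequency gain coming from a bilinear Strichartz-type estimate that exploits the transversality between the paraboloids $\{\ta = |\x|^{2}\}$ (on which $v$ essentially lives) and $\{\ta = -|\x|^{2}\}$ (on which each $\overline{u_{j}}$ essentially lives). The pairing of $m$ anti-holomorphic factors against one holomorphic one is precisely what makes the resonance function nondegenerate and is, I expect, the central difficulty; the case analysis splits according to which of the $u_{j}$-frequencies is comparable to the output frequency and which are smaller, and one must sum the resulting geometric series without losing logarithms at the critical exponent $s_{c}=d/2-1/(m-1)$.

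Given this multilinear estimate, the map $\Phi$ is a contraction on $B_{2r}(\dot{Z}^{s_{c}}([0,\infty)))$ for $r$ sufficiently small, which yields a unique fixed point $u$ and Lipschitz dependence on $u_{0}$. Scattering is then automatic in this framework: because the nonlinear Duhamel integral is controlled in $\dot{Z}^{s_{c}}$, the integral $\int_{0}^{\infty} e^{-is\Delta}\p_{k}(\overline{u}^{m})(s)\,ds$ converges in $\dot{H}^{s_{c}}$, so $u_{+}:=u_{0}-i\int_{0}^{\infty}e^{-is\Delta}\p_{k}(\overline{u}^{m})\,ds$ satisfies $\|e^{-it\Delta}u(t)-u_{+}\|_{\dot{H}^{s_{c}}}\to 0$ as $t\to\infty$. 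Part (ii) is obtained by repeating the argument in the inhomogeneous space $Z^{s}([0,\infty))$, $s\geq s_{c}$. The multilinear estimate upgrades by writing $\|\cdot\|_{Z^{s}}^{2}\sim \|P_{\le 1}\cdot\|_{L^{2}}^{2}\text{-type}+\|\cdot\|_{\dot{Z}^{s}}^{2}$, using the low-frequency $L^{2}$ control to absorb the derivative in the low-frequency regime and applying the already-established critical estimate to the high-frequency piece, together with the elementary inclusion $H^{s}\hookrightarrow \dot{H}^{s_{c}}\cap \dot{H}^{s}$ for $s\geq s_{c}$. No new multilinear estimate is needed, only an additional book-keeping step.
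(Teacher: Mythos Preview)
Your outline is correct and matches the paper's approach: contraction in $\dot{Z}^{s_{c}}([0,\infty))$, reduction via $V^{2}$-duality to a frequency-localized $(m{+}1)$-linear space--time integral, and a dyadic case split (high--low $N_{0}\sim N_{1}\gg N_{2}\ge\cdots$ versus high--high $N_{0}\lesssim N_{1}\sim N_{2}$), with the structural input that all factors in $\partial_{k}(\overline{u}^{m})$ sit on the same paraboloid so the resonance function $\sum_{j}|\xi_{j}|^{2}$ is coercive. One clarification on the mechanism: in the paper the full derivative $N_{0}$ is not recovered by a bilinear Strichartz gain between opposite paraboloids, but by a modulation decomposition---Lemma~\ref{modul_est} forces $\max_{j}|\tau_{j}+|\xi_{j}|^{2}|\gtrsim N_{\max}^{2}$, so one factor carries $Q^{\Delta}_{\ge M}$ with $M\sim N_{\max}^{2}$ and the estimate $\|Q^{\Delta}_{\ge M}u\|_{L^{2}_{tx}}\lesssim M^{-1/2}\|u\|_{V^{2}_{\Delta}}$ cancels the $N_{0}$; the multilinear $L^{2}$ estimate (Corollary~\ref{UV_Multi}), which is what you call the bilinear Strichartz input, is used afterwards to close the dyadic sums without logarithmic loss.
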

\begin{rem}
Due to the time reversibility of the system (\ref{mDNLS}), the above theorems also hold in corresponding intervals $(-\infty, 0)$. 
We denote the flow map with $t\in (-\infty ,0)$ by $S_{-}$. 
\end{rem}
\begin{cor}\label{sccat}Assume $d\geq 1$, $m\geq 3$. \\
{\rm (i)} Let $r>0$ be as in Theorem~\ref{wellposed_1}. 
For every $u_{0}\in B_{r}(\dot{H}^{s_{c}})$, there exists 
$u_{\pm}\in \dot{H}^{s_{c}}$ 
such that 
\[
S_{\pm}(u_{0})-e^{it\Delta}u_{\pm}
\rightarrow 0
\ {\rm in}\ \dot{H^{s_{c}}}\ {\rm as}\ t\rightarrow \pm \infty. 
\]
{\rm (ii)} The statement in {\rm (i)} remains valid if we replace the space 
$\dot{H}^{s_{c}}$ by $H^{s}$ for $s\geq s_{c}$.  
\end{cor}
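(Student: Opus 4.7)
The plan is to define the scattering state as the ``limit at infinity of the Duhamel correction'' and use the multilinear nonlinear estimate underlying Theorem~\ref{wellposed_1} together with the $U^{2}/V^{2}$ structure of $\dot{Z}^{s_c}$ to deduce convergence in $\dot{H}^{s_c}$.

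Given $u_{0}\in B_{r}(\dot{H}^{s_{c}})$, let $u=S_{+}(u_{0})\in \dot{Z}^{s_{c}}([0,\infty))$. Applying $e^{-it\Delta}$ to Duhamel's formula gives
\begin{equation*}
e^{-it\Delta}u(t)=u_{0}-i\int_{0}^{t}e^{-is\Delta}\partial_{k}(\overline{u}^{m})(s)\,ds,
\end{equation*}
so it is natural to set
\begin{equation*}
u_{+}:=u_{0}-i\int_{0}^{\infty}e^{-is\Delta}\partial_{k}(\overline{u}^{m})(s)\,ds
\end{equation*}
as the candidate asymptotic state. Scattering in $\dot{H}^{s_{c}}$ then reduces to the Cauchy criterion
\begin{equation*}
\Bigl\|\int_{t_{1}}^{t_{2}}e^{-is\Delta}\partial_{k}(\overline{u}^{m})(s)\,ds\Bigr\|_{\dot{H}^{s_{c}}}\longrightarrow 0\quad\text{as}\quad t_{1},t_{2}\to\infty.
\end{equation*}

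To produce this, I would localize the multilinear estimate proved for Theorem~\ref{wellposed_1} to an arbitrary subinterval,
\begin{equation*}
\Bigl\|\int_{t_{1}}^{\,\cdot\,}e^{i(\,\cdot\,-s)\Delta}\partial_{k}(\overline{u}^{m})(s)\,ds\Bigr\|_{\dot{Z}^{s_{c}}([t_{1},t_{2}))}\lesssim \|u\|_{\dot{Z}^{s_{c}}([t_{1},t_{2}))}^{m},
\end{equation*}
and combine it with the embedding $\dot{Z}^{s_{c}}\hookrightarrow L^{\infty}_{t}\dot{H}^{s_{c}}$ built into the $U^{2}$-based definition of $\dot{Z}^{s_{c}}$ (Definition~\ref{YZ_space}). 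This controls the tail in the Cauchy criterion by $\|u\|_{\dot{Z}^{s_{c}}([t_{1},\infty))}^{m}$. Since $u$ has finite global $\dot{Z}^{s_{c}}$-norm, the monotone behaviour of $U^{2}/V^{2}$-norms on nested intervals yields $\|u\|_{\dot{Z}^{s_{c}}([t_{1},\infty))}\to 0$ as $t_{1}\to\infty$, closing the argument and identifying $u_{+}$ as the scattering state.

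The main obstacle will be rigorously establishing the vanishing $\|u\|_{\dot{Z}^{s_{c}}([t_{1},\infty))}\to 0$ as $t_{1}\to\infty$: this is the analogue, for bounded $2$-variation functions, of absolute continuity of the integral, and has to be checked against the concrete definition of $\dot{Z}^{s_{c}}$ rather than abstractly. The companion embedding $\dot{Z}^{s_{c}}\hookrightarrow L^{\infty}_{t}\dot{H}^{s_{c}}$ is comparatively immediate, following from $U^{2}_{\Delta}\hookrightarrow L^{\infty}_{t}\dot{H}^{s_{c}}$. Part (ii) of the corollary is obtained by repeating the argument verbatim with $\dot{H}^{s_{c}}$ and $\dot{Z}^{s_{c}}$ replaced by $H^{s}$ and $Z^{s}$, invoking the inhomogeneous multilinear estimate from Theorem~\ref{wellposed_1}(ii).
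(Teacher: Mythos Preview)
Your plan has a genuine gap at the step you yourself flag as the ``main obstacle'': the claim that $\|u\|_{\dot{Z}^{s_{c}}([t_{1},\infty))}\to 0$ as $t_{1}\to\infty$ is \emph{false}. The $U^{2}_{\Delta}$-norm behaves like an $L^{\infty}_{t}$-type norm, not like an $L^{p}_{t}$-norm with $p<\infty$, so there is no ``absolute continuity of the integral'' to invoke. Concretely, the free part $e^{it\Delta}u_{0}$ already obstructs the claim: for any $t_{1}$ the restriction norm satisfies
\[
\|e^{it\Delta}u_{0}\|_{\dot{Z}^{s_{c}}([t_{1},\infty))}=\|u_{0}\|_{\dot{H}^{s_{c}}},
\]
since $e^{-it\Delta}(e^{it\Delta}u_{0})=u_{0}$ is a single $U^{2}$-atom on $[t_{1},\infty)$, and by $U^{2}\hookrightarrow L^{\infty}_{t}L^{2}_{x}$ no extension can have smaller norm. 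More generally, $\|u\|_{\dot{Z}^{s_{c}}([t_{1},\infty))}\gtrsim\sup_{t\ge t_{1}}\|u(t)\|_{\dot{H}^{s_{c}}}$, and the right-hand side converges to $\|u_{+}\|_{\dot{H}^{s_{c}}}>0$ whenever the scattering state is nonzero. Hence the Cauchy-criterion bound $\|u\|_{\dot{Z}^{s_{c}}([t_{1},\infty))}^{m}$ cannot be driven to zero, and the argument does not close.

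The paper sidesteps this entirely. Rather than trying to make a tail norm small, it uses the structural fact recorded in Proposition~\ref{upvpprop}(ii): every $V^{2}$ function has a limit in $L^{2}$ as $t\to\infty$. From the multilinear estimate (Proposition~\ref{Duam_est}) one knows $I_{\infty}(u,\dots,u)\in\dot{Z}^{s_{c}}\hookrightarrow\dot{Y}^{s_{c}}$, so for each dyadic $N$ the function $t\mapsto N^{s_{c}}e^{-it\Delta}P_{N}I_{\infty}(u,\dots,u)(t)$ lies in $V^{2}$ and therefore converges in $L^{2}$; the $\ell^{2}$-summability built into the $\dot{Z}^{s_{c}}$-norm then upgrades this to convergence of $e^{-it\Delta}I_{\infty}(u,\dots,u)(t)$ in $\dot{H}^{s_{c}}$, which is exactly the existence of $u_{+}$. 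If you want to repair your approach, the natural route is to replace the (false) tail-vanishing of $\|u\|_{\dot{Z}^{s_{c}}}$ by this existence-of-limits property for $V^{2}$ applied to the Duhamel term.
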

The main tools of our results are $U^{p}$ space and $V^{p}$ space which are applied to prove 
the well-posedness and the scattering for KP-II equation at the scaling critical regularity by Hadac, Herr and Koch (\cite{HHK09}, \cite{HHK10}). \\
\kuuhaku \\
\noindent {\bf Notation.} 
We denote the spatial Fourier transform by\ \ $\widehat{\cdot}$\ \ or $\F_{x}$, 
the Fourier transform in time by $\F_{t}$ and the Fourier transform in all variables by\ \ $\widetilde{\cdot}$\ \ or $\F_{tx}$. 
The free evolution $e^{it\Delta}$ on $L^{2}$ is given as a Fourier multiplier
\[
\F_{x}[e^{it\Delta}f](\xi )=e^{-it|\xi |^{2}}\widehat{f}(\xi ). 
\]
We will use $A\lesssim B$ to denote an estimate of the form $A \le CB$ for some constant $C$ and write $A \sim B$ to mean $A \lesssim B$ and $B \lesssim A$. 
We will use the convention that capital letters denote dyadic numbers, e.g. $N=2^{n}$ for $n\in \Z$ and for a dyadic summation we write
$\sum_{N}a_{N}:=\sum_{n\in \Z}a_{2^{n}}$ and $\sum_{N\geq M}a_{N}:=\sum_{n\in \Z, 2^{n}\geq M}a_{2^{n}}$ for brevity. 
Let $\chi \in C^{\infty}_{0}((-2,2))$ be an even, non-negative function such that $\chi (t)=1$ for $|t|\leq 1$. 
We define $\psi (t):=\chi (t)-\chi (2t)$ and $\psi_{N}(t):=\psi (N^{-1}t)$. Then, $\sum_{N}\psi_{N}(t)=1$ whenever $t\neq 0$. 
We define frequency and modulation projections
\[
\widehat{P_{N}u}(\xi ):=\psi_{N}(\xi )\widehat{u}(\xi ),\ 
\widetilde{Q_{M}^{\Delta}u}(\tau ,\xi ):=\psi_{M}(\tau +|\xi|^{2})\widetilde{u}(\tau ,\xi ).
\]
Furthermore, we define $Q_{\geq M}^{\Delta}:=\sum_{N\geq M}Q_{N}^{\Delta}$ and $Q_{<M}^{\Delta}:=Id -Q_{\geq M}^{\Delta}$. 

The rest of this paper is planned as follows.
In Section 2, we will give the definition and properties of the $U^{p}$ space and $V^{p}$ space. 
In Sections 3, we will give the multilinear estimates which are main estimates in this paper. 
In Section 4, we will give the proof of the well-posedness and the scattering (Theorems~\ref{wellposed_1} and Corollary~\ref{sccat}). 
%

\section{$U^{p}$, $V^{p}$ spaces  and their properties \label{func_sp}}
In this section, we define the $U^{p}$ space and the $V^{p}$ space, 
and introduce the properties of these spaces which are proved by Hadac, Herr and Koch (\cite{HHK09}, \cite{HHK10}). 

We define the set of finite partitions $\ZZ$ as
\[
\ZZ :=\left\{ \{t_{k}\}_{k=0}^{K}|K\in \N , -\infty <t_{0}<t_{1}<\cdots <t_{K}\leq \infty \right\}
\]
and if $t_{K}=\infty$, we put $v(t_{K}):=0$ for all functions $v:\R \rightarrow L^{2}$. 
\begin{defn}\label{upsp}
Let $1\leq p <\infty$. For $\{t_{k}\}_{k=0}^{K}\in \ZZ$ and $\{\phi_{k}\}_{k=0}^{K-1}\subset L^{2}$ with 
$\sum_{k=0}^{K-1}||\phi_{k}||_{L^{2}}^{p}=1$ we call the function $a:\R\rightarrow L^{2}$ 
given by
\[
a(t)=\sum_{k=1}^{K}\ee_{[t_{k-1},t_{k})}(t)\phi_{k-1}
\]
a ``$U^{p}${\rm -atom}''. 
Furthermore, we define the atomic space 
\[
U^{p}:=\left\{ \left. u=\sum_{j=1}^{\infty}\lambda_{j}a_{j}
\right| a_{j}:U^{p}{\rm -atom},\ \lambda_{j}\in \C \ {\rm such\ that}\  \sum_{j=1}^{\infty}|\lambda_{j}|<\infty \right\}
\]
with the norm
\[
||u||_{U^{p}}:=\inf \left\{\sum_{j=1}^{\infty}|\lambda_{j}|\left|u=\sum_{j=1}^{\infty}\lambda_{j}a_{j},\ 
a_{j}:U^{p}{\rm -atom},\ \lambda_{j}\in \C\right.\right\}.
\]
\end{defn}
\begin{defn}\label{vpsp}
Let $1\leq p <\infty$. We define the space of the bounded $p$-variation 
\[
V^{p}:=\{ v:\R\rightarrow L^{2}|\ ||v||_{V^{p}}<\infty \}
\]
with the norm
\[
||v||_{V^{p}}:=\sup_{\{t_{k}\}_{k=0}^{K}\in \ZZ}\left(\sum_{k=1}^{K}||v(t_{k})-v(t_{k-1})||_{L^{2}}^{p}\right)^{1/p}.
\]
Likewise, let $V^{p}_{-, rc}$ denote the closed subspace of all right-continuous functions $v\in V^{p}$ with 
$\lim_{t\rightarrow -\infty}v(t)=0$, endowed with the same norm  $||\cdot ||_{V^{p}}$.
\end{defn}
\begin{prop}[\cite{HHK09} Proposition\ 2.2,\ 2.4,\ Corollary\ 2.6]\label{upvpprop}
Let $1\leq p<q<\infty$. \\
{\rm (i)} $U^{p}$, $V^{p}$ and $V^{p}_{-, rc}$ are Banach spaces. \\ 
{\rm (ii)} For Every $v\in V^{p}$, $\lim_{t\rightarrow -\infty}v(t)$ and $\lim_{t\rightarrow \infty}v(t)$ exist in $L^{2}$. \\
{\rm (iii)} The embeddings $U^{p}\hookrightarrow V^{p}_{-,rc}\hookrightarrow U^{q}\hookrightarrow L^{\infty}_{t}(\R ;L^{2}_{x}(\R^{d}))$ are continuous. 
\end{prop}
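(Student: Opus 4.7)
\vspace{1ex}

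The plan is to handle the three parts essentially in the order (i), (iii), (ii), though the pieces intertwine: the easy direction of (iii) together with the atomic-space completeness gives most of (i) for $U^p$, and (ii) follows almost immediately from the definition of the $V^p$ seminorm.

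For part (i), I would first treat $U^p$. Homogeneity and the triangle inequality for $\|\cdot\|_{U^p}$ follow from the infimum-over-decompositions definition (concatenating decompositions). Non-degeneracy is a consequence of the pointwise bound $\|a(t)\|_{L^2}\le 1$ for every $U^p$-atom, which I would also reuse for the final embedding in (iii). Completeness is the standard atomic-space argument: extract a subsequence $u_{n_k}$ with $\|u_{n_{k+1}}-u_{n_k}\|_{U^p}<2^{-k}$, write each difference as an atomic sum respecting this bound, and sum the resulting atomic decompositions to represent the candidate limit. For $V^p$, Minkowski in $\ell^p$ gives the seminorm properties; on $V^p_{-,rc}$ the boundary condition $\lim_{t\to-\I}v(t)=0$ promotes this to a genuine norm, since vanishing $p$-variation forces $v$ to be constant. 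Completeness for $V^p$ I would prove by observing that for any fixed $t$ the crude partition $\{-\I,t\}$ yields $\|v_n(t)-v_m(t)\|_{L^2}\le \|v_n-v_m\|_{V^p}$, so $v_n(t)\to v(t)$ pointwise in $L^2$; lower-semicontinuity of the $V^p$-seminorm under pointwise limits then gives $v\in V^p$ with $v_n\to v$ in norm, and right-continuity plus the $-\I$ boundary condition pass to the limit as well.

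For (ii), suppose toward contradiction that $v(t_n)$ is not Cauchy in $L^2$ for some $t_n\to\I$. Thin out to a subsequence with $\|v(t_{n+1})-v(t_n)\|_{L^2}>\eps$ for all $n$; truncating at length $K$ gives a partition in $\ZZ$ with $p$-variation $\ge \eps K^{1/p}$, which blows up and contradicts $\|v\|_{V^p}<\I$. The limit at $-\I$ is identical. Part (iii) splits into three embeddings. For $U^q\hookrightarrow L^\I_tL^2_x$, the bound $\|a(t)\|_{L^2}\le(\sum_k\|\phi_{k-1}\|_{L^2}^q)^{1/q}=1$ for any $U^q$-atom extends by the atomic decomposition. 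For $U^p\hookrightarrow V^p_{-,rc}$, a direct computation on an atom shows that for any refining partition $\{s_j\}$,
\[
\sum_j\|a(s_j)-a(s_{j-1})\|_{L^2}^p\le \sum_k\|\phi_k-\phi_{k-1}\|_{L^2}^p\le 2^p\sum_k(\|\phi_k\|^p+\|\phi_{k-1}\|^p)\le 2^{p+1},
\]
so $\|a\|_{V^p}\lec 1$; right-continuity and $\lim_{t\to-\I}a(t)=0$ are built into the atomic definition (using the convention $v(t_K)=0$ when $t_K=\I$).

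The main obstacle is the middle embedding $V^p_{-,rc}\hookrightarrow U^q$ with $p<q$, which I would prove by a greedy stopping-time construction. Given $v$ with $\|v\|_{V^p}=1$, for each level $\ell\in\N$ set $t^{(\ell)}_0:=-\I$ and inductively define $t^{(\ell)}_k$ as the first time $t>t^{(\ell)}_{k-1}$ at which $\|v(t)-v(t^{(\ell)}_{k-1})\|_{L^2}\ge 2^{-\ell}$ (using right-continuity to ensure this is attained, and stopping when no such $t$ exists). The number $K_\ell$ of stopping times at level $\ell$ satisfies $K_\ell\cdot 2^{-\ell p}\lec \|v\|_{V^p}^p=1$, so $K_\ell\lec 2^{\ell p}$. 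Define the step approximation $v_\ell(t):=\sum_k\1_{[t^{(\ell)}_{k-1},t^{(\ell)}_k)}(t)\,v(t^{(\ell)}_{k-1})$ and telescope $v = v_0+\sum_\ell(v_{\ell+1}-v_\ell)$ (with $v_\ell\to v$ in $L^\I_tL^2_x$ as $\ell\to\I$). Each difference $v_{\ell+1}-v_\ell$ uses the common refinement of the two partitions, and by the construction each nonzero ``jump'' has $L^2$-norm $\lec 2^{-\ell}$; therefore after normalization $v_{\ell+1}-v_\ell$ is a multiple of a $U^q$-atom with coefficient $\lec 2^{-\ell}\cdot K_\ell^{1/q}\lec 2^{-\ell(1-p/q)}$. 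Since $p<q$, this is summable in $\ell$, giving a $U^q$-representation of $v$ with $\|v\|_{U^q}\lec \|v\|_{V^p}$. The delicate step is keeping the bookkeeping clean enough that the exponents line up (and verifying that the $v_\ell$ really converge to $v$ in $L^\I_tL^2_x$, which uses right-continuity and the stopping-time definition together with the finite $p$-variation).
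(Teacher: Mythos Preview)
The paper does not prove this proposition at all; it is stated with a citation to \cite{HHK09} (Propositions~2.2, 2.4 and Corollary~2.6 there) and no argument is given. Your outline is correct and is essentially the proof one finds in that reference: atomic-space completeness for $U^p$, pointwise limits plus lower semicontinuity for $V^p$, the Cauchy-sequence contradiction for (ii), and the greedy stopping-time/level-set decomposition for the nontrivial embedding $V^p_{-,rc}\hookrightarrow U^q$.

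One small remark on your handling of (i): with the paper's convention that partitions in $\ZZ$ may end at $t_K=\infty$ and $v(\infty):=0$, the quantity $\|\cdot\|_{V^p}$ is already a genuine norm on all of $V^p$, not merely a seminorm. Indeed, for a nonzero constant $v\equiv c$ the partition $\{t_0,\infty\}$ contributes $\|c\|_{L^2}^p>0$. So the non-degeneracy on $V^p$ does not require the extra boundary condition from $V^p_{-,rc}$; your phrasing ``the boundary condition promotes this to a genuine norm'' slightly understates what the convention already gives you. This does not affect the correctness of your argument, only its framing.
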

\begin{thm}[\cite{HHK09} Proposition\ 2,10,\ Remark\ 2.12]\label{duality}
Let $1<p<\infty$ and $1/p+1/p'=1$. 
If $u\in V^{1}_{-,rc}$ be absolutely continuous on every compact intervals, then
\[
||u||_{U^{p}}=\sup_{v\in V^{p'}, ||v||_{V^{p'}}=1}\left|\int_{-\infty}^{\infty}(u'(t),v(t))_{L^{2}(\R^{d})}dt\right|.
\]
\end{thm}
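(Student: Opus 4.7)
The plan is to derive the identity from the abstract duality $(U^p)^* \cong V^{p'}$ already established as Proposition~2.10 of \cite{HHK09}. That proposition identifies each $v \in V^{p'}$ with a bounded linear functional on $U^p$ via a bilinear form $B(u,v)$ defined first on step functions by
\[
B(u,v) = \sum_{k=1}^{K}\bigl(u(t_k) - u(t_{k-1}),\, v(t_{k-1})\bigr)_{L^2(\R^d)}
\]
(adapted to a common refinement of the partitions of $u$ and $v$), with bound $|B(u,v)| \leq \|u\|_{U^p}\|v\|_{V^{p'}}$ and norm identity $\|u\|_{U^p} = \sup_{\|v\|_{V^{p'}}=1}|B(u,v)|$ for every $u \in U^p$. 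The content of Theorem~\ref{duality} is that, when $u$ is absolutely continuous on every compact interval, this abstract pairing $B(u,v)$ coincides with the integral $\int_{\R} (u'(t), v(t))_{L^2}\, dt$; once that is shown, the desired formula becomes an immediate restatement of the abstract duality.

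I would first verify the identification $B(u,v) = \int (u',v)\, dt$ for $v$ a step function. If $v$ is constant on each $[t_{k-1}, t_k)$, then the fundamental theorem of calculus on each subinterval, legitimate because $u$ is absolutely continuous there, yields
\[
(u(t_k) - u(t_{k-1}),\, v(t_{k-1}))_{L^2} = \int_{t_{k-1}}^{t_k} (u'(t), v(t))_{L^2}\, dt,
\]
and summation over $k$ produces $B(u,v) = \int_{\R} (u'(t), v(t))_{L^2}\, dt$, with boundary contributions vanishing because $u \in V^1_{-,rc}$ forces $\lim_{t\to-\infty}u(t) = 0$ and $v(t_K) = 0$ when $t_K = \infty$. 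Next, for general $v \in V^{p'}$, I would approximate $v$ pointwise almost everywhere by step functions $v_n$ adapted to partitions becoming dense in $\R$, with $\sup_n \|v_n\|_{V^{p'}} \leq \|v\|_{V^{p'}}$. Since $u \in V^1$ is locally absolutely continuous, the variation equals $\int \|u'(t)\|_{L^2}\, dt$, so $u'$ belongs to $L^1_t L^2_x$; together with the uniform $L^\infty_t L^2_x$ bound for $v_n$ and $v$ inherited from Proposition~\ref{upvpprop}(iii), dominated convergence gives $\int (u', v_n)\, dt \to \int (u', v)\, dt$. Continuity of $B(u,\cdot)$ with respect to the topology in which $v_n \to v$ then transfers the step-function identity to all $v \in V^{p'}$.

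The main obstacle is the passage from step $v$ to general $v \in V^{p'}$: convergence in the $V^{p'}$-norm fails for generic functions of bounded $p'$-variation, so in place of norm density one must rely on a pointwise approximation scheme combined with a dominated convergence argument, using the existence of one-sided limits of $V^{p'}$-functions (Proposition~\ref{upvpprop}(ii)) to handle boundary terms and to justify that the discrete pairing $B(u, v_n)$ converges to the abstract $B(u,v)$. Once the integral representation has been established, both inequalities in $\|u\|_{U^p} = \sup_{\|v\|_{V^{p'}}=1}\bigl|\int (u',v)\, dt\bigr|$ follow directly from the abstract duality identity.
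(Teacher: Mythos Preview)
The paper does not prove this theorem; it is quoted from \cite{HHK09} (Proposition~2.10 and Remark~2.12), as the citation in the theorem header and the opening sentence of Section~\ref{func_sp} make explicit. There is therefore no proof in the present paper against which to compare your proposal.

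For what it is worth, your outline follows the route of \cite{HHK09}, Remark~2.12: take the abstract duality $\|u\|_{U^p}=\sup_{\|v\|_{V^{p'}}=1}|B(u,v)|$ for granted and identify $B(u,v)$ with $\int_{\R}(u'(t),v(t))_{L^2}\,dt$ when $u$ is absolutely continuous. The one point to sharpen is the passage from step $v$ to general $v\in V^{p'}$: you invoke ``continuity of $B(u,\cdot)$ with respect to the topology in which $v_n\to v$,'' but since $v_n$ need not converge to $v$ in the $V^{p'}$-norm, this continuity does not follow from the $(U^p)^*$-bound on $B$. A cleaner way around this is to note that for a refining sequence of partitions $\{t_k^{(n)}\}$ the Riemann--Stieltjes sums
\[
\sum_k \bigl(u(t_k^{(n)})-u(t_{k-1}^{(n)}),\,v(t_{k-1}^{(n)})\bigr)_{L^2}
\]
converge to $B(u,v)$ (this is how the pairing is constructed in \cite{HHK09}), and for absolutely continuous $u$ each such sum is exactly $\int_{\R}(u'(t),v_n(t))_{L^2}\,dt$ with $v_n$ the associated step approximant of $v$; your dominated-convergence argument then gives $\int(u',v_n)\,dt\to\int(u',v)\,dt$, which closes the identification directly without a separate continuity statement for $B(u,\cdot)$.
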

\begin{defn}
Let $1\leq p<\infty$. We define
\[
U^{p}_{\Delta}:=\{ u:\R\rightarrow L^{2}|\ e^{-it\Delta}u\in U^{p}\}
\]
with the norm $||u||_{U^{p}_{\Delta}}:=||e^{-it\Delta}u||_{U^{p}}$, 
\[
V^{p}_{\Delta}:=\{ v:\R\rightarrow L^{2}|\ e^{-it\Delta}v\in V^{p}_{-,rc}\}
\]
with the norm $||v||_{V^{p}_{\Delta}}:=||e^{-it\Delta}v||_{V^{p}}$.
\end{defn}
\begin{rem}
The embeddings $U^{p}_{\Delta}\hookrightarrow V^{p}_{\Delta}\hookrightarrow U^{q}_{\Delta}\hookrightarrow L^{\infty}(\R;L^{2})$ hold for $1\leq p<q<\infty$
by {\rm Proposition~\ref{upvpprop}}. 
\end{rem}
\begin{prop}[\cite{HHK09} Corollary\ 2.18]\label{projest}
Let $1< p<\infty$. We have
\begin{align}
&||Q_{\geq M}^{\Delta}u||_{L_{tx}^{2}}\lesssim M^{-1/2}||u||_{V^{2}_{\Delta}},\label{highMproj}\\
&||Q_{<M}^{\Delta}u||_{V^{p}_{\Delta}}\lesssim ||u||_{V^{p}_{\Delta}},\ \ ||Q_{\geq M}^{\Delta}u||_{V^{p}_{\Delta}}\lesssim ||u||_{V^{p}_{\Delta}},\label{Vproj}
\end{align}
\end{prop}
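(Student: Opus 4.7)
My approach, following \cite{HHK09}, is to reduce both estimates to one-dimensional temporal Fourier multiplier bounds on $L^{2}_{x}$-valued functions. A direct Fourier computation, based on the identity $\widetilde{e^{-it\Delta}f}(\tau,\xi)=\widetilde{f}(\tau-|\xi|^{2},\xi)$, yields the conjugation relation
\[
e^{-it\Delta}\,Q_{\geq M}^{\Delta}u \;=\; P_{\geq M}^{t}\bigl(e^{-it\Delta}u\bigr),
\]
where $P_{\geq M}^{t}$ is the Fourier multiplier in $t$ alone with symbol $\psi_{\geq M}(\tau)$, and set $P_{<M}^{t}:=\operatorname{Id}-P_{\geq M}^{t}$. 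Since $\|u\|_{V^{p}_{\Delta}}=\|e^{-it\Delta}u\|_{V^{p}}$ by definition and $e^{-it\Delta}$ acts as an isometry on $L^{2}_{tx}$, it suffices to prove, for $w$ in the corresponding $L^{2}_{x}$-valued $V$-space,
\[
\|P_{\geq M}^{t}w\|_{L^{2}_{tx}}\lesssim M^{-1/2}\|w\|_{V^{2}},\qquad \|P_{<M}^{t}w\|_{V^{p}}+\|P_{\geq M}^{t}w\|_{V^{p}}\lesssim \|w\|_{V^{p}}.
\]

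For \eqref{Vproj} the argument is soft. The projection $P_{<M}^{t}$ is convolution in $t$ with a dilated Schwartz kernel $K_{M}$ whose $L^{1}_{t}$-norm is $M$-independent by scaling. For an arbitrary $K\in L^{1}(\R)$, a step function $w\in V^{p}(\R;L^{2}_{x})$, and a partition $\{t_{k}\}$, Minkowski's integral inequality applied to the $\ell^{p}_{k}$-norm of the $L^{2}_{x}$-valued differences $(K*w)(t_{k})-(K*w)(t_{k-1})=\int K(s)[w(t_{k}-s)-w(t_{k-1}-s)]\,ds$ gives $\|K*w\|_{V^{p}}\leq \|K\|_{L^{1}}\|w\|_{V^{p}}$. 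The general case follows by density in $V^{p}_{-,rc}$, and the bound for $P_{\geq M}^{t}$ is then immediate from the triangle inequality.

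For \eqref{highMproj} I would reduce by density to a right-continuous step function $w=\sum_{k=1}^{K}\phi_{k-1}\mathbf{1}_{[t_{k-1},t_{k})}$, perform summation by parts in $k$ on the Fourier side, and obtain
\[
\widehat{w}(\tau,x)=(i\tau)^{-1}\sum_{k=0}^{K}\Phi_{k}(x)e^{-it_{k}\tau},\qquad \Phi_{k}:=\phi_{k}-\phi_{k-1}.
\]
Squaring and integrating over $\{|\tau|\geq M\}\times\R^{d}$, the diagonal $(j=k)$ contribution is exactly $\sim M^{-1}\sum_{k}\|\Phi_{k}\|_{L^{2}_{x}}^{2}\leq M^{-1}\|w\|_{V^{2}}^{2}$, which gives the desired bound in that term alone.

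\textbf{Main obstacle.} The hard part is the off-diagonal cross terms $\langle \Phi_{j},\Phi_{k}\rangle_{L^{2}_{x}}\int_{|\tau|\geq M}\tau^{-2}e^{-i(t_{j}-t_{k})\tau}\,d\tau$; integration by parts furnishes the decay $\min(1/M,\,1/(M^{2}|t_{j}-t_{k}|))$, but because neither the number nor the spacing of the partition points is controlled by $\|w\|_{V^{2}}$, a direct Schur-test argument fails. Following \cite{HHK09} I would circumvent this by first proving the estimate on $U^{2}$-atoms (where the atomic condition $\sum\|\phi_{k-1}\|_{L^{2}_{x}}^{2}=1$ together with the $L^{2}_{t}$-orthogonality of the $\mathbf{1}_{[t_{k-1},t_{k})}$ provides the needed structure), and then extending to $w\in V^{2}_{-,rc}$ via Theorem~\ref{duality}: for a test function $g\in L^{2}_{tx}$ of unit norm, the primitive $H(t,x):=\int_{-\infty}^{t}(P_{\geq M}^{t}g)(s,x)\,ds$ is absolutely continuous, lies in $V^{2}_{-,rc}$, and by Plancherel together with Cauchy--Schwarz satisfies $\|H(t)\|_{L^{2}_{x}}\lesssim M^{-1/2}$ uniformly in $t$; pairing $w$ with $g$ via the identification $\int \langle w,P_{\geq M}^{t}g\rangle_{L^{2}_{x}}\,dt=-\int \langle H,dw\rangle_{L^{2}_{x}}$ and invoking Theorem~\ref{duality} closes the argument.
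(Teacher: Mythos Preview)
The paper does not supply its own proof of this proposition; it is quoted verbatim from \cite{HHK09}, Corollary~2.18, with no argument reproduced here. So there is no ``paper's proof'' to compare against, and your proposal must stand on its own.

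Your reduction via the conjugation $e^{-it\Delta}Q^{\Delta}_{\geq M}u = P^{t}_{\geq M}(e^{-it\Delta}u)$ is correct, and your treatment of \eqref{Vproj} is fine: the low-pass symbol $1-\psi_{\geq M}$ telescopes to $\chi(2\tau/M)$, whose inverse Fourier transform has $M$-independent $L^{1}_{t}$-norm, and the Minkowski bound $\|K\ast w\|_{V^{p}}\leq\|K\|_{L^{1}}\|w\|_{V^{p}}$ goes through for every $w\in V^{p}$ directly (no density step is needed).

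There is, however, a genuine gap in your argument for \eqref{highMproj}. In the duality step you establish $\|H(t)\|_{L^{2}_{x}}\lesssim M^{-1/2}$ uniformly in $t$ and then want to pair $H$ against $dw$ for $w\in V^{2}$. But the Stieltjes pairing $\int\langle H,dw\rangle$ is controlled by $\|H\|_{L^{\infty}_{t}L^{2}_{x}}\|w\|_{V^{1}}$, not $\|w\|_{V^{2}}$: if $w$ jumps by $\Delta_{k}$ at $t_{k}$ then $\bigl|\sum_{k}\langle H(t_{k}),\Delta_{k}\rangle\bigr|\leq\|H\|_{L^{\infty}}\sum_{k}\|\Delta_{k}\|$, and Cauchy--Schwarz would introduce an uncontrolled factor counting the jumps. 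What Theorem~\ref{duality} actually gives is $\bigl|\int(H',w)\,dt\bigr|\leq\|H\|_{U^{2}}\|w\|_{V^{2}}$, so you need $\|H\|_{U^{2}}\lesssim M^{-1/2}\|g\|_{L^{2}}$, and an $L^{\infty}_{t}L^{2}_{x}$ bound does not imply this (a highly oscillatory $H$ can be small in $L^{\infty}$ yet large in $U^{2}$). Your earlier remark that the $U^{2}$-atom case is handled by ``$L^{2}_{t}$-orthogonality of the $\mathbf{1}_{[t_{k-1},t_{k})}$'' is also not right: $P^{t}_{\geq M}$ destroys that orthogonality, and the same cross terms reappear.

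A short route that avoids all of this is translation averaging: for any $h>0$ and $t_{0}\in\R$, the uniform partition $\{t_{0}+kh\}_{k}$ gives $\sum_{k}\|w(t_{0}+(k+1)h)-w(t_{0}+kh)\|_{L^{2}_{x}}^{2}\leq\|w\|_{V^{2}}^{2}$; integrating over $t_{0}\in[0,h]$ yields $\|w(\cdot+h)-w\|_{L^{2}_{tx}}^{2}\leq h\,\|w\|_{V^{2}}^{2}$. On the Fourier side this reads $\int|e^{ih\tau}-1|^{2}\|\widehat{w}(\tau)\|_{L^{2}_{x}}^{2}\,d\tau\leq h\,\|w\|_{V^{2}}^{2}$, and choosing $h\sim N^{-1}$ gives $\|P^{t}_{N}w\|_{L^{2}_{tx}}\lesssim N^{-1/2}\|w\|_{V^{2}}$; summing $\sum_{N\geq M}N^{-1}\sim M^{-1}$ then delivers \eqref{highMproj}.
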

\begin{prop}[\cite{HHK09} Proposition\ 2.19]\label{multiest}
Let 
\[
T_{0}:L^{2}(\R^{d})\times \cdots \times L^{2}(\R^{d})\rightarrow L^{1}_{loc}(\R^{d})
\]
be a $m$-linear operator. Assume that for some $1\leq p, q< \infty$
\[
||T_{0}(e^{it\Delta}\phi_{1},\cdots ,e^{it\Delta}\phi_{m})||_{L^{p}_{t}(\R :L^{q}_{x}(\R^{d}))}\lesssim \prod_{i=1}^{m}||\phi_{i}||_{L^{2}(\R^{d})}.
\]
Then, there exists $T:U^{p}_{\Delta}\times \cdots \times U^{p}_{\Delta}\rightarrow L^{p}_{t}(\R ;L^{q}_{x}(\R^{d}))$ satisfying
\[
|T(u_{1},\cdots ,u_{m})||_{L^{p}_{t}(\R ;L^{q}_{x}(\R^{d}))}\lesssim \prod_{i=1}^{m}||u_{i}||_{U^{p}_{\Delta}}
\]
such that $T(u_{1},\cdots ,u_{m})(t)(x)=T_{0}(u_{1}(t),\cdots ,u_{m}(t))(x)$ a.e.
\end{prop}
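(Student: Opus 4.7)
The approach is to extend the hypothesized multilinear bound from pure free solutions $e^{it\Delta}\phi$ to arbitrary $u_i\in U^p_\Delta$ by exploiting the atomic structure of $U^p_\Delta$ together with a common-refinement argument. The operator $T$ itself will simply be $T_0$ applied pointwise in $t$, so the only real task is to establish the $L^p_tL^q_x$ norm bound.

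First, I would fix atomic decompositions $u_i=\sum_j \lambda^{(i)}_j a^{(i)}_j$ with $\sum_j|\lambda^{(i)}_j|$ essentially equal to $\|u_i\|_{U^p_\Delta}$. By multilinearity of $T_0$ and the triangle inequality in $L^p_tL^q_x$, matters reduce to proving the bound $\|T_0(a_1,\ldots,a_m)\|_{L^p_tL^q_x}\lesssim 1$ for $U^p_\Delta$-atoms $a_i$. Such an atom has the form
\[
a_i(t)=\sum_{k=1}^{K_i}\mathbf{1}_{[t^{(i)}_{k-1},t^{(i)}_k)}(t)\,e^{it\Delta}\phi^{(i)}_{k-1},\qquad \sum_{k=1}^{K_i}\|\phi^{(i)}_{k-1}\|_{L^2}^p=1,
\]
and I would introduce the common refinement $\{J_\ell\}_{\ell=1}^L$ of all breakpoints $\{t^{(i)}_k\}_{i,k}$. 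On each $J_\ell$ every $a_i$ coincides with a single free solution $e^{it\Delta}\phi^{(i)}_{k_i(\ell)-1}$ for a uniquely determined index $k_i(\ell)$, and multilinearity of $T_0$ gives
\[
T_0(a_1(t),\ldots,a_m(t))=\sum_{\ell} \mathbf{1}_{J_\ell}(t)\,T_0\bigl(e^{it\Delta}\phi^{(1)}_{k_1(\ell)-1},\ldots,e^{it\Delta}\phi^{(m)}_{k_m(\ell)-1}\bigr).
\]

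Since the $J_\ell$ are pairwise disjoint in $t$, the $L^p_tL^q_x$-norm of the left-hand side raised to the $p$-th power equals the sum over $\ell$ of the corresponding $p$-th powers on $J_\ell$. Dominating each time integral by the one over $\R$ and invoking the hypothesis bounds this sum by $\sum_\ell \prod_i \|\phi^{(i)}_{k_i(\ell)-1}\|_{L^2}^p$. The delicate point, which I expect to be the main obstacle, is the combinatorial observation that $\ell\mapsto (k_1(\ell),\ldots,k_m(\ell))$ is injective: the intersection $\bigcap_i[t^{(i)}_{k_i-1},t^{(i)}_{k_i})$ is an interval containing no breakpoint of the common refinement, so it is either empty or a single $J_\ell$. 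Once this is noted one dominates by the product sum
\[
\sum_{k_1,\ldots,k_m}\prod_i\|\phi^{(i)}_{k_i-1}\|_{L^2}^p = \prod_{i=1}^{m}\sum_{k_i=1}^{K_i}\|\phi^{(i)}_{k_i-1}\|_{L^2}^p = 1,
\]
closing the atomic estimate. Combining this with the atomic decompositions of $u_1,\ldots,u_m$ via the triangle inequality yields the conclusion with constant controlled by $\prod_i \|u_i\|_{U^p_\Delta}$, and the pointwise definition of $T$ makes the identification $T(u_1,\ldots,u_m)(t)(x)=T_0(u_1(t),\ldots,u_m(t))(x)$ automatic.
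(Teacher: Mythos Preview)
The paper does not supply its own proof of this proposition; it is quoted verbatim from \cite{HHK09} (Proposition~2.19) as a known tool, with no argument given. Your proof is correct and is essentially the standard argument from \cite{HHK09}: reduce to $U^p_\Delta$-atoms via the atomic decomposition and multilinearity, pass to the common refinement of the partitions so that on each subinterval every factor is a single free solution, apply the hypothesis on each subinterval, and use the injectivity of $\ell\mapsto(k_1(\ell),\ldots,k_m(\ell))$ to dominate the resulting sum by the product $\prod_i\sum_{k_i}\|\phi^{(i)}_{k_i-1}\|_{L^2}^p=1$. The only point one might add for completeness is a word on the a.e.\ identification $T(u_1,\ldots,u_m)(t)=T_0(u_1(t),\ldots,u_m(t))$ for general $u_i\in U^p_\Delta$: the estimate on finite sums of atoms defines a bounded multilinear extension $T$, and since convergence in $U^p_\Delta$ implies convergence in $L^\infty_tL^2_x$, the pointwise formula persists in the limit (this is how \cite{HHK09} phrases the conclusion). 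This is routine and does not affect the substance of your argument.
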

\begin{prop}[Strichartz estimate]\label{Stri_est}
Let $(p,q)$ be an admissible pair of exponents for the Schr\"odinger equation, i.e. $2\leq q\leq 2d/(d-2)$ 
{\rm (}$2\leq q< \infty$ if $d=2$, $2\leq q\leq \infty$ if $d=1${\rm )}, $2/p =d(1/2-1/q)$. Then, we have
\[
||e^{it\Delta}\varphi ||_{L_{t}^{p}L_{x}^{q}}\lesssim ||\varphi ||_{L^{2}_{x}}
\]
for any $\varphi \in L^{2}(\R^{d})$. 
\end{prop}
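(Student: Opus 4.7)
The plan is to prove Proposition~\ref{Stri_est} by the classical $TT^{\ast}$ argument, reducing the space--time estimate to a pointwise-in-time dispersive bound and then exploiting a fractional integration inequality.

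The first step is to establish the dispersive estimate. Using the explicit kernel representation
\EQQ{
e^{it\Delta}\varphi (x)=(4\pi i t)^{-d/2}\int_{\R^{d}}e^{i|x-y|^{2}/(4t)}\varphi (y)\,dy,
}
one reads off $\|e^{it\Delta}\varphi\|_{L^{\I}_{x}}\lec |t|^{-d/2}\|\varphi\|_{L^{1}_{x}}$ for $t\neq 0$. Combining this with the $L^{2}$ isometry $\|e^{it\Delta}\varphi\|_{L^{2}_{x}}=\|\varphi\|_{L^{2}_{x}}$ and Riesz--Thorin interpolation gives, for every admissible $q$,
\EQQ{
\|e^{it\Delta}\varphi \|_{L^{q}_{x}}\lec |t|^{-d(1/2-1/q)}\|\varphi\|_{L^{q'}_{x}},
}
where $q'$ is the H\"older conjugate of $q$.

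Next I would set up the $TT^{\ast}$ argument. Define $T\varphi (t,x):=e^{it\Delta}\varphi (x)$, so that its formal adjoint is $T^{\ast}F=\int_{\R}e^{-is\Delta}F(s,\cdot )\,ds$, and
\EQQ{
(TT^{\ast}F)(t,x)=\int_{\R}e^{i(t-s)\Delta}F(s,\cdot )(x)\,ds.
}
By duality, the claim $\|T\varphi\|_{L^{p}_{t}L^{q}_{x}}\lec \|\varphi\|_{L^{2}_{x}}$ is equivalent to the bound $\|TT^{\ast}F\|_{L^{p}_{t}L^{q}_{x}}\lec \|F\|_{L^{p'}_{t}L^{q'}_{x}}$. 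Applying Minkowski's inequality in $x$ and the dispersive estimate inside the time integral,
\EQQ{
\|(TT^{\ast}F)(t,\cdot )\|_{L^{q}_{x}}\lec \int_{\R}|t-s|^{-d(1/2-1/q)}\|F(s,\cdot )\|_{L^{q'}_{x}}\,ds.
}
Since $d(1/2-1/q)=2/p$ by admissibility, and in the non-endpoint range this exponent lies strictly between $0$ and $1$, the Hardy--Littlewood--Sobolev inequality on $\R$ yields exactly the required $L^{p'}_{t}\to L^{p}_{t}$ bound, because $1/p'-1/p=1-2/p=1-d(1/2-1/q)$.

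The main obstacle is the endpoint $q=2d/(d-2)$ for $d\geq 3$, where the critical exponent $2/p=1$ prevents a direct application of Hardy--Littlewood--Sobolev. At that endpoint I would invoke the bilinear real-interpolation argument of Keel--Tao, which replaces the scalar fractional-integration step by a dyadic decomposition of the time variable together with bilinear off-diagonal estimates obtained from the dispersive bound at each dyadic scale, and then sums these via atomic decomposition of Lorentz spaces. The cases $d=1$ (where $q=\I$ is admissible and handled by Sobolev embedding or direct computation) and $d=2$ (where $q=\I$ is explicitly excluded) do not involve this endpoint difficulty and fall entirely within the $TT^{\ast}$+HLS scheme.
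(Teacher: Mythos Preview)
Your argument is correct and is exactly the standard proof of the Strichartz estimates: dispersive decay plus the $TT^{\ast}$ method and Hardy--Littlewood--Sobolev in the non-endpoint range, with the Keel--Tao argument at the endpoint $q=2d/(d-2)$ for $d\geq 3$. One small remark: the case $d=1$, $q=\infty$ is already covered by the same $TT^{\ast}$+HLS scheme, since then $d(1/2-1/q)=1/2\in (0,1)$; invoking ``Sobolev embedding'' there is not quite right (the borderline $H^{1/2}(\R)\hookrightarrow L^{\infty}$ fails), but no separate treatment is needed.

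As for comparison with the paper: the paper does not give a proof of this proposition at all. It is stated as a classical fact (the linear Strichartz estimate) and used only as input, via Proposition~\ref{multiest}, to obtain the $U^{p}_{\Delta}$ bound in Corollary~\ref{Up_Stri}. So there is nothing to compare---you have supplied a correct proof of a result the paper simply quotes.
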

Proposition~\ref{multiest} and ~\ref{Stri_est} imply the following.
\begin{cor}\label{Up_Stri}
Let $(p,q)$ be an admissible pair of exponents for the Schr\"odinger equation, i.e. $2\leq q\leq 2d/(d-2)$ 
{\rm (}$2\leq q< \infty$ if $d=2$, $2\leq q\leq \infty$ if $d=1${\rm )}, $2/p =d(1/2-1/q)$. Then, we have
\begin{equation}\label{U_Stri}
||u||_{L_{t}^{p}L_{x}^{q}}\lesssim ||u||_{U_{\Delta}^{p}},\ \ u\in U^{p}_{\Delta}.
\end{equation}
\end{cor}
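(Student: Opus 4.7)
The plan is to apply the transference principle of Proposition~\ref{multiest} in the linear case $m=1$, taking $T_0$ to be the identity operator, and using the Strichartz estimate from Proposition~\ref{Stri_est} as the input hypothesis. Under the admissibility assumption, Proposition~\ref{Stri_est} gives precisely $\|e^{it\Delta}\phi\|_{L^p_t L^q_x} \lesssim \|\phi\|_{L^2}$, which is exactly the hypothesis of Proposition~\ref{multiest} with $m=1$. Thus there exists an operator $T : U^p_\Delta \to L^p_t L^q_x$ satisfying $\|T u\|_{L^p_t L^q_x} \lesssim \|u\|_{U^p_\Delta}$ and coinciding with $u$ pointwise a.e., which yields the claim.

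For clarity, I would also sketch the underlying atomic argument. Given $u \in U^p_\Delta$, write $e^{-it\Delta}u = \sum_j \lambda_j a_j$ with $U^p$-atoms $a_j$ and $\sum_j |\lambda_j|$ approximating $\|u\|_{U^p_\Delta}$. For a single atom $a(t) = \sum_{k=1}^K \mathbf{1}_{[t_{k-1},t_k)}(t)\phi_{k-1}$ with $\sum_k \|\phi_{k-1}\|_{L^2}^p = 1$, I compute
\[
\|e^{it\Delta}a\|_{L^p_t L^q_x}^p = \sum_{k=1}^K \|e^{it\Delta}\phi_{k-1}\|_{L^p([t_{k-1},t_k);L^q_x)}^p \leq \sum_{k=1}^K \|e^{it\Delta}\phi_{k-1}\|_{L^p_t L^q_x}^p \lesssim \sum_{k=1}^K \|\phi_{k-1}\|_{L^2}^p = 1,
\]
where the last inequality is the standard Strichartz estimate applied on each piece.

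Then by the triangle inequality in $L^p_t L^q_x$,
\[
\|u\|_{L^p_t L^q_x} = \Bigl\| \sum_j \lambda_j e^{it\Delta}a_j \Bigr\|_{L^p_t L^q_x} \leq \sum_j |\lambda_j|\, \|e^{it\Delta}a_j\|_{L^p_t L^q_x} \lesssim \sum_j |\lambda_j|,
\]
and taking the infimum over all atomic representations yields $\|u\|_{L^p_t L^q_x} \lesssim \|u\|_{U^p_\Delta}$.

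There is essentially no obstacle here: the entire argument is a routine invocation of the transference principle built into the $U^p$-machinery. The only mildly delicate point is the identification of the abstract operator $T$ produced by Proposition~\ref{multiest} with the actual function $u$; this is resolved by the pointwise a.e.\ statement at the end of Proposition~\ref{multiest}, which justifies writing the estimate directly for $u$ rather than for $Tu$.
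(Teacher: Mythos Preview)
Your proof is correct and follows exactly the paper's approach: the paper simply states that the corollary is implied by Proposition~\ref{multiest} and Proposition~\ref{Stri_est}, and you have applied Proposition~\ref{multiest} with $m=1$, $T_{0}=\mathrm{Id}$, using Proposition~\ref{Stri_est} as the required hypothesis. Your added atomic sketch is just the standard proof of the transference principle specialized to this case and is not needed, but it is correct.
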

\begin{prop}[\cite{HHK09} Proposition\ 2.20]\label{intpol}
Let $q>1$, $E$ be a Banach space and $T:U^{q}_{\Delta}\rightarrow E$ be a bounded, linear operator 
with $||Tu||_{E}\leq C_{q}||u||_{U^{q}_{\Delta}}$ for all $u\in U^{q}_{\sigma}$.  
In addition, assume that for some $1\leq p<q$ there exists $C_{p}\in (0,C_{q}]$ such that the estimate $||Tu||_{E}\leq C_{p}||u||_{U^{p}_{\Delta}}$ holds true for all $u\in U^{p}_{\Delta}$. Then, $T$ satisfies the estimate
\[
||Tu||_{E}\lesssim C_{p}\left( 1+\ln \frac{C_{q}}{C_{p}}\right) ||u||_{V^{p}_{\Delta}},\ \ u\in V^{p}_{-,rc,\Delta}, 
\]
where implicit constant depends only on $p$ and $q$.
\end{prop}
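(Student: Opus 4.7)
The plan is to prove the bound by a dyadic decomposition of $v$ and an endpoint-type interpolation: I decompose $v$ into step-function pieces whose $U^{p}$ norms stay bounded but whose $U^{q}$ norms decay geometrically, apply the $U^{p}$ and $U^{q}$ hypotheses respectively to a coarse approximant and a fine tail, and optimize the cutoff. Since $e^{-it\Delta}$ is an isometric isomorphism between $U^{p}_{\Delta}$ and $U^{p}$, and between $V^{p}_{\Delta}$ and $V^{p}_{-,rc}$, I precompose $T$ with $e^{it\Delta}$ and reduce to proving the analogous estimate for an operator $T$ satisfying $\|Tu\|_{E}\leq C_{p}\|u\|_{U^{p}}$ and $\|Tu\|_{E}\leq C_{q}\|u\|_{U^{q}}$, acting on $v\in V^{p}_{-,rc}$ normalized so that $\|v\|_{V^{p}}\leq 1$.

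Fix such a $v$. For each integer $k\geq 0$ I construct a partition $\{t_{j}^{k}\}_{j=0}^{J_{k}}\in \ZZ$ greedily: set $t_{0}^{k}:=-\infty$ with the convention $v(-\infty)=0$, and inductively let $t_{j+1}^{k}$ be the first time $t>t_{j}^{k}$ for which $\|v(t)-v(t_{j}^{k})\|_{L^{2}}\geq 2^{-k}$, stopping when no such time exists. The definition of the $V^{p}$ norm forces $J_{k}\lesssim 2^{kp}$, and since $\|v(t)\|_{L^{2}}\leq \|v\|_{V^{p}}\leq 1$ for every $t$, the value of $v$ at every partition point is uniformly bounded in $L^{2}$. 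Let $v_{k}$ be the step function taking the value $v(t_{j}^{k})$ on $[t_{j}^{k},t_{j+1}^{k})$; by the greedy choice, $\|v-v_{k}\|_{L^{\infty}_{t}L^{2}_{x}}\leq 2^{-k}$.

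The dyadic differences $w_{k}:=v_{k}-v_{k-1}$ ($k\geq 1$) are step functions on the common refinement of $\{t_{j}^{k}\}$ and $\{t_{j}^{k-1}\}$, with at most $J_{k}+J_{k-1}\lesssim 2^{kp}$ pieces, and on each piece the value has the form $v(t_{j}^{k})-v(t_{l}^{k-1})$ and therefore has $L^{2}$-norm bounded by $2^{-(k-1)}$. Treating $w_{k}$ (and likewise $v_{0}$) as a single normalized atom in the sense of Definition~\ref{upsp} yields
\[
\|v_{0}\|_{U^{p}}+\|v_{0}\|_{U^{q}}\lesssim 1,\qquad \|w_{k}\|_{U^{p}}\lesssim \bigl(J_{k}\cdot 2^{-kp}\bigr)^{1/p}\lesssim 1,\qquad \|w_{k}\|_{U^{q}}\lesssim \bigl(J_{k}\cdot 2^{-kq}\bigr)^{1/q}\lesssim 2^{-k(q-p)/q}.
\]

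For any positive integer $K$, write $v_{K}=v_{0}+\sum_{k=1}^{K}w_{k}$ and $v-v_{K}=\sum_{k>K}w_{k}$, where the tail converges in $U^{q}$ by the geometric bound above. Summing the $U^{p}$ contributions of the coarse part and the $U^{q}$ contributions of the tail yields
\[
\|Tv\|_{E}\leq C_{p}\|v_{K}\|_{U^{p}}+C_{q}\|v-v_{K}\|_{U^{q}}\lesssim C_{p}(K+1)+C_{q}\,2^{-K(q-p)/q}.
\]
Choosing the integer $K$ nearest to $\tfrac{q}{q-p}\log_{2}(C_{q}/C_{p})$ balances the two terms and produces the asserted estimate $C_{p}(1+\log(C_{q}/C_{p}))\|v\|_{V^{p}}$. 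The step requiring the most care is the bookkeeping in the greedy construction---checking that the count $J_{k}\lesssim 2^{kp}$ combined with the uniform $L^{2}$ bounds on the step values really delivers the displayed $U^{p}$- and $U^{q}$-norm estimates with constants depending only on $p$ and $q$; once that is in place, the telescoping and the final optimization over $K$ are routine.
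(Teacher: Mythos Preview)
The paper does not supply its own proof of this proposition; it is quoted without proof from \cite{HHK09}, Proposition~2.20. Your argument is correct and is in fact the standard proof given there: decompose $v\in V^{p}_{-,rc}$ via greedy stopping times into a telescoping sum of step functions $w_{k}$ with $\|w_{k}\|_{U^{p}}\lesssim 1$ and $\|w_{k}\|_{U^{q}}\lesssim 2^{-k(q-p)/q}$, then split at level $K$ and optimize. The bookkeeping you single out---$J_{k}\lesssim 2^{kp}$ from the $V^{p}$ bound, and the $L^{2}$ size of each step value---is exactly what drives those norm estimates, and your treatment is accurate.
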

Next, we define the function spaces which will be used to construct the solution. 
\begin{defn}\label{YZ_space}
Let $s$, $\sigma\in \R$.\\
{\rm (i)} We define $\dot{Z}^{s}:=\{u\in C(\R ; \dot{H}^{s}(\R^{d}))\cap U^{2}_{\Delta}|\ ||u||_{\dot{Z}^{s}}<\infty\}$ with the norm
\[
||u||_{\dot{Z}^{s}}:=\left(\sum_{N}N^{2s}||P_{N}u||^{2}_{U^{2}_{\Delta}}\right)^{1/2}.
\]
{\rm (ii)} We define $Z^{s}:=\{u\in C(\R ; H^{s}(\R^{d}))\cap U^{2}_{\Delta}|\ ||u||_{Z^{s}}<\infty\}$ with the norm
\[
||u||_{Z^{s}}:=||u||_{\dot{Z}^{0}}+||u||_{\dot{Z}^{s}}. 
\]
{\rm (iii)} We define $\dot{Y}^{s}:=\{u\in C(\R ; \dot{H}^{s}(\R^{d}))\cap V^{2}_{-,rc,\Delta}|\ ||u||_{\dot{Y}^{s}}<\infty\}$ with the norm
\[
||u||_{\dot{Y}^{s}}:=\left(\sum_{N}N^{2s}||P_{N}u||^{2}_{V^{2}_{\Delta}}\right)^{1/2}.
\]
{\rm (iv)} We define $Y^{s}:=\{u\in C(\R ; H^{s}(\R^{d}))\cap V^{2}_{-,rc,\Delta}|\ ||u||_{Y^{s}}<\infty\}$ with the norm
\[
||u||_{Y^{s}}:=||u||_{\dot{Y}^{0}}+||u||_{\dot{Y}^{s}}.
\]
\end{defn}
\begin{rem}[\cite{HHK09} Remark\ 2.23]
Let $E$ be a Banach space of continuous functions $f:\R\rightarrow H$, for some Hilbert space $H$. 
We also consider the corresponding restriction space to the interval $I\subset \R$ by
\[
E(I)=\{u\in C(I,H)|\exists v\in E\ s.t.\ v(t)=u(t),\ t\in I\}
\]
endowed with the norm $||u||_{E(I)}=\inf \{||v||_{E}|v(t)=u(t),\ t\in I\}$. 
Obviously, $E(I)$ is also a Banach space. 
\end{rem}
\section{Multilinear estimates \label{Multi_est}}
%
%
In this section, we prove multilinear estimates which will be used to prove the well-posedness.  
\begin{lemm}\label{L2_Multi}
Let $d\geq 1$, $m\geq 2$, $s_{c}=d/2-1/(m-1)$ and $b>1/2$. 
For any dyadic numbers $N_{1}\gg N_{2}\geq \cdots \geq N_{m}$, we have
\begin{equation}\label{L2_Multi_est}
\left|\left|\prod_{j=1}^{m}P_{N_{j}}u_{j}\right|\right|_{L^{2}_{tx}}\lesssim ||P_{N_{1}}u_{1}||_{X^{0,b}}\prod_{j=2}^{m}\left(\frac{N_{j}}{N_{1}}\right)^{1/2(m-1)}N_{j}^{s_{c}}||P_{N_{j}}u_{j}||_{X^{0,b}}, 
\end{equation}
where $||u||_{X^{0,b}}:=||\langle \tau +|\xi|^{2}\rangle^{b}\widetilde{u}||_{L^{2}_{\tau \xi}}$.
\end{lemm}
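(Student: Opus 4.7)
The plan is to reduce the $m$-linear $L^{2}_{tx}$ estimate to the bilinear Schr\"odinger Strichartz bound between the highest-frequency factor $P_{N_{1}}u_{1}$ and each lower-frequency factor $P_{N_{j}}u_{j}$ ($j\ge 2$), distributed symmetrically across the $m-1$ low-frequency indices by a Hölder decomposition so that the exponents assemble into the form in \eqref{L2_Multi_est}. The two external ingredients I would take for granted are: (a) the Bourgain-type bilinear Schr\"odinger Strichartz for free waves,
\[
\|P_{N_{1}}e^{it\Delta}\phi_{1}\cdot P_{N_{j}}e^{it\Delta}\phi_{2}\|_{L^{2}_{tx}} \lesssim (N_{j}^{d-1}/N_{1})^{1/2}\|\phi_{1}\|_{L^{2}}\|\phi_{2}\|_{L^{2}}, \qquad N_{1}\gg N_{j},
\]
transferred to $X^{0,b}$ with $b>1/2$ by the classical $X^{s,b}$ transference principle; and (b) Bernstein combined with the embedding $X^{0,b}\hookrightarrow L^{\infty}_{t}L^{2}_{x}$ ($b>1/2$), yielding $\|P_{N_{j}}u_{j}\|_{L^{\infty}_{tx}}\lesssim N_{j}^{d/2}\|P_{N_{j}}u_{j}\|_{X^{0,b}}$.

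The first step is a symmetric Hölder decomposition. Using the pointwise identity
\[
|P_{N_{1}}u_{1}|^{2}\prod_{j=2}^{m}|P_{N_{j}}u_{j}|^{2}=\prod_{j=2}^{m}\bigl(|P_{N_{1}}u_{1}|^{2/(m-1)}|P_{N_{j}}u_{j}|^{2}\bigr)
\]
and Hölder's inequality with $m-1$ equal exponents, I obtain
\[
\Big\|\prod_{j=1}^{m}P_{N_{j}}u_{j}\Big\|_{L^{2}_{tx}}\le \prod_{j=2}^{m}\bigl\|P_{N_{1}}u_{1}\cdot (P_{N_{j}}u_{j})^{m-1}\bigr\|_{L^{2}_{tx}}^{1/(m-1)}.
\]
For each $j\ge 2$ I would then factor one copy of $P_{N_{j}}u_{j}$ into a bilinear $L^{2}_{tx}$ pairing with $P_{N_{1}}u_{1}$ and place the remaining $m-2$ copies in $L^{\infty}_{tx}$,
\[
\bigl\|P_{N_{1}}u_{1}(P_{N_{j}}u_{j})^{m-1}\bigr\|_{L^{2}_{tx}}\le \|P_{N_{1}}u_{1}P_{N_{j}}u_{j}\|_{L^{2}_{tx}}\,\|P_{N_{j}}u_{j}\|_{L^{\infty}_{tx}}^{m-2},
\]
then apply (a) and (b) respectively.

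Combining, each $j\ge 2$ contributes $\bigl[(N_{j}^{d-1}/N_{1})^{1/2}N_{j}^{d(m-2)/2}\bigr]^{1/(m-1)}$, so the exponent of $N_{j}$ becomes $\bigl((d-1)+d(m-2)\bigr)/(2(m-1))= d/2-1/(2(m-1))=s_{c}+1/(2(m-1))$, while the $m-1$ copies of $N_{1}^{-1/(2(m-1))}$ collect to $N_{1}^{-1/2}$. Rewriting $N_{1}^{-1/(2(m-1))}N_{j}^{s_{c}+1/(2(m-1))}=(N_{j}/N_{1})^{1/(2(m-1))}N_{j}^{s_{c}}$ recovers exactly the right-hand side of \eqref{L2_Multi_est}. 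The main technical point is verifying the bilinear Strichartz gain $(N_{j}^{d-1}/N_{1})^{1/2}$ in the $X^{0,b}$ formulation: in $d=1$ this follows from an elementary Plancherel/change-of-variables computation exploiting the Jacobian $|\xi_{1}-\xi_{2}|\sim N_{1}$, in $d\ge 2$ it is Bourgain's refined bilinear estimate, and the transfer to $X^{0,b}$ for $b>1/2$ is routine via the Fourier representation of $X^{0,b}$ as a superposition of modulated free evolutions. The remaining exponent bookkeeping is mechanical.
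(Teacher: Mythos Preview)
Your argument is correct and complete; the exponent bookkeeping and the use of the bilinear refinement plus Bernstein are all sound. The route, however, differs in execution from the paper's.

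The paper works directly in Fourier variables: after duality and the change $\tau_{j}\mapsto\theta_{j}=\tau_{j}+|\xi_{j}|^{2}$, it performs a single change of variables in the $(\xi_{1},\xi_{2})$ block (with Jacobian $|\xi_{1}^{(k)}-\xi_{2}^{(k)}|\sim N_{1}$) to extract the bilinear gain $N_{2}^{d-1}/N_{1}$, and handles $\xi_{3},\dots,\xi_{m}$ by the crude volume bound $|A_{N_{j}}|\sim N_{j}^{d}$. In other words, the paper pairs $u_{1}$ only with $u_{2}$ and then uses the ordering $N_{2}\geq N_{j}$ to redistribute $N_{2}^{d-1}N_{1}^{-1}\prod_{j\ge 3}N_{j}^{d}$ into the symmetric form $\prod_{j\ge 2}(N_{j}/N_{1})^{1/(m-1)}N_{j}^{2s_{c}}$. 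Your approach instead symmetrizes from the outset via the H\"older identity, pairing $u_{1}$ with \emph{each} $u_{j}$ and invoking the bilinear Strichartz estimate $m-1$ times as a black box.

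Both arguments rest on the same core mechanism (the bilinear high--low refinement). Yours is more modular and makes the reduction to known estimates transparent; the paper's is self-contained (it effectively re-derives the bilinear bound inside the proof) and slightly more economical, needing only one application of the refined estimate followed by an elementary monotonicity step.
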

\begin{proof}
For the case $d=2$ and $m=2$, the estimate (\ref{L2_Multi_est}) is proved by 
Colliander, Delort, Kenig, and Staffilani (\cite{CDKS01} Lemma 1). 
The proof for general case as following is similar to their argument. 

We put $g_{j}(\tau_{j},\xi_{j}):=\langle \tau_{j} +|\xi_{j}|^{2}\rangle^{b}\widetilde{P_{N_{j}}u_{j}}(\tau_{j},\xi_{j})$\ $(j=1,\cdots ,m)$  
and $A_{N}:=\{\xi \in \R^{d} |N/2\leq |\xi |\leq 2N\}$
for a dyadic number $N$. 
By the Plancherel's theorem and the duality argument, it is enough to prove the estimate
\[
\begin{split}
I&:=\left|\int_{\R^{m}}\int_{\prod_{j=1}^{m}A_{N_{j}}}g_{0}\left(\sum_{j=1}^{m}\tau_{j}, \sum_{j=1}^{m}\xi_{j}\right)
\prod_{j=1}^{m}\frac{g_{j}(\tau_{j},\xi_{j})}{\langle \tau_{j}+|\xi_{j}|^{2}\rangle^{b}}
d\xi_{*} d\tau_{*}\right|\\
&\lesssim \left(\prod_{j=2}^{m}\left(\frac{N_{j}}{N_{1}}\right)^{1/2(m-1)}N_{j}^{s_{c}}\right) \prod_{j=0}^{m}||g_{j}||_{L^{2}_{\tau \xi}}
\end{split}
\]
for $g_{0}\in L^{2}_{\tau \xi}$, where $\xi_{*}=(\xi_{1},\cdots ,\xi_{m})$, $\tau_{*}=(\tau_{1},\cdots ,\tau_{m})$.  
We change the variables $\tau_{*}\mapsto \theta_{*}=(\theta_{1}, \cdots , \theta_{m})$ as 
$\theta_{j}=\tau_{j}+|\xi_{j}|^{2}$ $(j=1,\cdots ,m)$
and put 
\[
\begin{split}
G_{0}(\theta_{*}, \xi_{*})&:=g_{0}\left(\sum_{j=1}^{m}(\theta_{j}-|\xi_{j}|^{2}), \sum_{j=1}^{m}\xi_{j}\right),\\
G_{j}(\theta_{j},\xi_{j})&:=g_{j}(\theta_{j}-|\xi_{j}|^{2},\xi_{j})\ (j=1,\cdots ,m).
\end{split}
\]
Then, we have
\[
\begin{split}
I&\leq \int_{\R^{m}}\left(\prod_{j=1}^{m}\frac{1}{\langle \theta_{j}\rangle^{b}}\right)
\left(\int_{\prod_{j=1}^{m}A_{N_{j}}}\left|G_{0}(\theta_{*}, \xi_{*})\prod_{j=1}^{m}G_{j}(\theta_{j}, \xi_{j})\right|d\xi_{*}\right) d\theta_{*}\\
&\lesssim \int_{\R^{m}}\left(\prod_{j=1}^{m}\frac{1}{\langle \theta_{j}\rangle^{b}}\right)
\left(\int_{\prod_{j=1}^{m}A_{N_{j}}}|G_{0}(\theta_{*}, \xi_{*})|^{2}d\xi_{*}\right)^{1/2}\prod_{j=1}^{m}||G_{j}(\theta_{j}, \cdot )||_{L^{2}_{\xi}}d\theta_{*}
\end{split}
\]
by the Cauchy-Schwartz inequality. 
For $1\leq k\leq d$, we put
\[
A_{N_{1}}^{k}:=\{\xi_{1}=(\xi_{1}^{(1)},\cdots, \xi_{1}^{(d)})\in \R^{d}|\ N_{1}/2\leq |\xi_{1}|\leq 2N_{1},\ |\xi_{1}^{(k)}|\geq N_{1}/(2\sqrt{d})\}
\]
and 
\[
J_{k}(\theta_{*}):=\left(\int_{A_{N_{1}}^{k}\times \prod_{j=2}^{m}A_{N_{j}}}|G_{0}(\theta_{*}, \xi_{*})|^{2}d\xi_{*}\right). 
\]
We consider only the estimate for $J_{1}$. The estimates for other $J_{k}$ are obtained by the same way. 

Assume $d\geq 2$. 
By changing the variables $(\xi_{1}, \xi_{2})=(\xi_{1}^{(1)},\cdots, \xi_{1}^{(d)}, \xi_{2}^{(1)},\cdots, \xi_{2}^{(d)}) \mapsto (\mu, \nu, \eta )$ as
\begin{equation}\label{ch_var}
\begin{cases}
\mu =\sum_{j=1}^{m}(\theta_{j}-|\xi_{j}|^{2})\in \R , \\
\nu =\sum_{j=1}^{m}\xi_{j}\in \R^{d},\\
\eta=(\xi_{2}^{(2)}\cdots, \xi_{2}^{(d)})\in \R^{d-1}, 
\end{cases}
\end{equation}
we have 
\[
d\mu d\nu d\eta=2|\xi_{1}^{(1)}-\xi_{2}^{(1)}|d\xi_{1}d\xi_{2}
\]
and
\[
G_{0}(\theta_{*}, \xi_{*})=g_{0}(\mu, \nu ).
\]
We note that $|\xi_{1}^{(1)}-\xi_{2}^{(1)}|\sim N_{1}$ for any $(\xi_{1}, \xi_{2})\in A_{N_{1}}^{1}\times A_{N_{2}}$ with $N_{1}\gg N_{2}$. 
Furthermore, $\xi_{2}\in A_{N_{2}}$ implies that $\eta \in [-2N_{2}, 2N_{2}]^{d-1}$.  Therefore,  we obtain
\[
\begin{split}
J_{1}(\theta_{*})&\lesssim 
\int_{\prod_{j=3}^{m}A_{N_{j}}}\left(\int_{[-2N_{2}, 2N_{2}]^{d-1}} \int_{\R^{d}} \int_{\R} |g_{0}(\mu,\nu)|^{2} \frac{1}{N_{1}}d\mu d\nu d\eta \right)
d\xi_{3}\cdots d\xi_{m}\\
&\sim\frac{N_{2}^{d-1}}{N_{1}}\left(\prod_{j=3}^{m}N_{j}^{d}\right) ||g_{0}||_{L^{2}_{\tau \xi}}^{2}
\leq \left(\prod_{j=2}^{m}\left(\frac{N_{j}}{N_{1}}\right)^{1/(m-1)}N_{j}^{d-2/(m-1)}\right) ||g_{0}||_{L^{2}_{\tau \xi}}^{2}
\end{split}
\]
since $N_{2}\geq N_{j}$ for $3\leq j\leq m$. As a result, we have
\[
\begin{split}
I&\lesssim\int_{\R^{m}}\left(\prod_{j=1}^{m}\frac{1}{\langle \theta_{j}\rangle^{b}}\right)
\left(\sum_{k=1}^{d}J_{k}(\theta_{*})\right)^{1/2}\prod_{j=1}^{m}||G_{j}(\theta_{j}, \cdot )||_{L^{2}_{\xi}}d\theta_{*}\\
&\lesssim \left(\prod_{j=2}^{m}\left(\frac{N_{j}}{N_{1}}\right)^{1/2(m-1)}N_{j}^{s_{c}}\right) \prod_{j=0}^{m}||g_{j}||_{L^{2}_{\tau \xi}}
\end{split}
\]
by the Cauchy-Schwartz inequality and changing the variables $\theta_{*}\mapsto \tau_{*}$ as $\theta_{j}=\tau_{j}+|\xi_{j}|^{2}$ $(j=1,\cdots ,m)$. 

For $d=1$, we obtain the same result by changing the variables $(\xi_{1}, \xi_{2}) \mapsto (\mu, \nu)$ as
$\mu =\sum_{j=1}^{m}(\theta_{j}-|\xi_{j}|^{2})$, $\nu =\sum_{j=1}^{m}\xi_{j}$ instead of (\ref{ch_var}). 
\end{proof}
\begin{cor}\label{UV_Multi}
Let $m\geq 2$, $m+d\geq 4$ and $s_{c}=d/2-1/(m-1)$. 
For any dyadic numbers $N_{1}\gg N_{2}\geq \cdots \geq N_{m}$ and $0<\delta <1/2(m-1)$, we have
\begin{align}
&\left|\left|\prod_{j=1}^{m}P_{N_{j}}u_{j}\right|\right|_{L^{2}_{tx}}\lesssim ||P_{N_{1}}u_{1}||_{U^{2}_{\Delta}}\prod_{j=2}^{m}\left(\frac{N_{j}}{N_{1}}\right)^{1/2(m-1)}N_{j}^{s_{c}}||P_{N_{j}}u_{j}||_{U^{2}_{\Delta}},\label{U_Multi}\\
&\left|\left|\prod_{j=1}^{m}P_{N_{j}}u_{j}\right|\right|_{L^{2}_{tx}}\lesssim ||P_{N_{1}}u_{1}||_{V^{2}_{\Delta}}\prod_{j=2}^{m}\left(\frac{N_{j}}{N_{1}}\right)^{\delta}N_{j}^{s_{c}}||P_{N_{j}}u_{j}||_{V^{2}_{\Delta}}.\label{V_Multi}
\end{align}
\end{cor}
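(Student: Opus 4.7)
The plan is to establish (\ref{U_Multi}) via the transfer principle (Proposition \ref{multiest}) and then to deduce (\ref{V_Multi}) from (\ref{U_Multi}) by the logarithmic interpolation (Proposition \ref{intpol}).

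For (\ref{U_Multi}), I would apply Proposition \ref{multiest} with $(p,q)=(2,2)$, which reduces the task to the free-evolution estimate
\[
\left\|\prod_{j=1}^m P_{N_j} e^{it\Delta}\phi_j\right\|_{L^2_{tx}} \lesssim \|P_{N_1}\phi_1\|_{L^2}\prod_{j=2}^m \left(\frac{N_j}{N_1}\right)^{1/2(m-1)} N_j^{s_c} \|P_{N_j}\phi_j\|_{L^2}.
\]
This free-evolution estimate can be proved by the same Fourier-analytic method as in Lemma \ref{L2_Multi}: after Plancherel and duality against a test function $g_0 \in L^2_{\tau\xi}$, the role of the weights $\langle \tau_j+|\xi_j|^2\rangle^{-b}$ is played by the $\delta$-functions $\delta(\tau_j + |\xi_j|^2)$ supporting the space-time Fourier transform of each free evolution; these trivialize the $\tau_j$-integrations, and the change of variables $(\xi_1,\xi_2) \mapsto (\mu,\nu,\eta)$ (or $(\xi_1,\xi_2) \mapsto (\mu,\nu)$ when $d=1$) reduces matters to Cauchy--Schwarz, with Jacobian comparable to $N_1$ and $\eta$-support of measure $N_2^{d-1}$. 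Redistributing the remaining volume factors $N_j^d$ for $j \geq 3$ using $N_j \leq N_2$ yields the symmetric form of the weight.

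For (\ref{V_Multi}), I would apply Proposition \ref{intpol} in each variable. Fixing all variables except one $u_{j_0}$ and regarding the $m$-linear product as a linear operator in $u_{j_0}$, the estimate (\ref{U_Multi}) provides the $U^2_\Delta$-bound with constant $C_2$. A cruder $U^q_\Delta$-bound for some fixed $q>2$ is obtained via H\"older's inequality combined with the Strichartz estimate (Corollary \ref{Up_Stri}) and Bernstein's inequality; this yields a constant $C_q$ with polynomial dependence on $N_1,\ldots,N_m$. Proposition \ref{intpol} then produces a $V^2_\Delta$-bound in $u_{j_0}$ with an extra multiplicative factor $1 + \log(C_q/C_2)$, of order $\log(2 + N_1/N_{j_0})$. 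Since $(N_{j_0}/N_1)^{1/2(m-1)-\delta}$ dominates such a logarithm for $N_{j_0} \leq N_1$, this factor is absorbed by replacing $1/2(m-1)$ with $\delta < 1/2(m-1)$. Iterating over $j_0 = 1,\ldots,m$ converts every $U^2_\Delta$ norm into a $V^2_\Delta$ norm and yields (\ref{V_Multi}).

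The main anticipated obstacle lies in this interpolation step: one must verify that the auxiliary $U^q_\Delta$-bound indeed has polynomial (rather than super-polynomial) dependence on the dyadic frequencies, since this is precisely what makes the logarithmic loss in Proposition \ref{intpol} controllable and absorbable by the arbitrarily small reduction of the exponent from $1/2(m-1)$ down to $\delta$.
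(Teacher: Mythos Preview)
Your proposal is correct and follows essentially the same strategy as the paper, with one minor tactical difference in each half.

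For (\ref{U_Multi}), instead of redoing the Fourier-analytic computation with $\delta$-functions, the paper derives the free-solution estimate from Lemma~\ref{L2_Multi} via a scaling argument: rescaling $(t,x)\mapsto(\lambda^2 t,\lambda x)$ with $\lambda=\sqrt{T}$ maps $[-T,T]$ to a unit interval, where the $X^{0,b}$ norm of a (time-localized) free solution is comparable to the $L^2$ norm of the data; letting $T\to\infty$ then gives the global free-solution bound, and Proposition~\ref{multiest} transfers it to $U^2_\Delta$. Your direct approach is equally valid and arguably more transparent; the paper's trick simply avoids repeating the change-of-variables computation.

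For (\ref{V_Multi}), the paper makes your ``cruder $U^q_\Delta$-bound'' completely explicit: taking $q=2m$, H\"older in $L^{2m}_t L^{2md/(md-2)}_x$ (a Schr\"odinger-admissible pair) together with the Sobolev embedding $\dot W^{s_c,\,2md/(md-2)}(\R^d)\hookrightarrow L^{m(m-1)d}(\R^d)$ yields
\[
\Bigl\|\prod_{j=1}^m P_{N_j}u_j\Bigr\|_{L^2_{tx}}\lesssim \|P_{N_1}u_1\|_{U^{2m}_\Delta}\prod_{j=2}^m N_j^{s_c}\|P_{N_j}u_j\|_{U^{2m}_\Delta},
\]
with \emph{no} frequency-ratio factor at all. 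This is precisely where the hypothesis $m+d\ge 4$ (equivalently $s_c\ge 0$) enters, and it resolves your anticipated obstacle cleanly: the ratio $C_q/C_p$ is exactly $\prod_{j=2}^m(N_1/N_j)^{1/2(m-1)}$, so the logarithmic loss from Proposition~\ref{intpol} is absorbed by passing from the exponent $1/2(m-1)$ to any $\delta<1/2(m-1)$, as you anticipated.
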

\begin{proof}
To obtain (\ref{U_Multi}), we use the argument of the proof of Corollary 2.21\ (27) in \cite{HHK09}. 
Let $\phi_{1}$, $\cdots$, $\phi_{m}\in L^{2}(\R^{d})$ and define $\phi_{j}^{\lambda}(x):=\phi_{j}(\lambda x)$ $(j=1,\cdots ,m)$
for $\lambda \in \R$. By using the rescaling $(t,x)\mapsto (\lambda^{2}t, \lambda x)$, we have
\[
\begin{split}
&\left|\left|\prod_{j=1}^{m}P_{N_{j}}(e^{it\Delta}\phi_{j})\right|\right|_{L^{2}([-T,T]\times \R^{d})}
=\lambda^{d/2+1}\left|\left|\prod_{j=1}^{m}P_{\lambda N_{j}}(e^{it\Delta}\phi_{j}^{\lambda})\right|\right|_{L^{2}([-\lambda^{-2}T,\lambda^{-2}T]\times \R^{d})}. 
\end{split}
\]
Therefore by putting $\lambda =\sqrt{T}$ and (\ref{L2_Multi_est}), we have
\[
\begin{split}
&\left|\left|\prod_{j=1}^{m}P_{N_{j}}(e^{it\Delta}\phi_{j})\right|\right|_{L^{2}([-T,T]\times \R^{d})}\\
&\lesssim \sqrt{T}^{md/2}||P_{\sqrt{T}N_{1}}\phi_{1}^{\sqrt{T}}||_{L^{2}_{x}}\prod_{j=2}^{m}\left(\frac{N_{j}}{N_{1}}\right)^{1/2(m-1)}N_{j}^{s_{c}}||P_{\sqrt{T}N_{j}}\phi_{j}^{\sqrt{T}}||_{L^{2}_{x}}\\
&=||P_{N_{1}}\phi_{1}||_{L^{2}_{x}}\prod_{j=2}^{m}\left(\frac{N_{j}}{N_{1}}\right)^{1/2(m-1)}N_{j}^{s_{c}}||P_{N_{j}}\phi_{j}||_{L^{2}_{x}}.
\end{split}
\]
Let $T\rightarrow \infty$, then we obtain 
\[
\left|\left|\prod_{j=1}^{m}P_{N_{j}}(e^{it\Delta}\phi_{j})\right|\right|_{L^{2}_{tx}}
\lesssim ||P_{N_{1}}\phi_{1}||_{L^{2}_{x}}\prod_{j=2}^{m}\left(\frac{N_{j}}{N_{1}}\right)^{1/2(m-1)}N_{j}^{s_{c}}||P_{N_{j}}\phi_{j}||_{L^{2}_{x}}
\]
and (\ref{U_Multi}) follows from proposition~\ref{multiest}. 

To obtain (\ref{V_Multi}), we first prove the $U^{2m}$ estimate. By the Cauchy-Schwartz inequality, the Sobolev embedding $\dot{W}^{s_{c}, 2md/(md-2)}(\R^{d})\hookrightarrow L^{m(m-1)d}(\R^{d})$ (which holds when $m\geq 2$, $m+d\geq 4$) and (\ref{U_Stri}), we have
\begin{equation}\label{Up_est}
\begin{split}
\left|\left|\prod_{j=1}^{m}P_{N_{j}}u_{j}\right|\right|_{L^{2}_{tx}}
&\lesssim ||P_{N_{1}}u_{1}||_{L^{2m}_{t}L^{2md/(md-2)}_{x}}\prod_{j=2}^{m}N_{j}^{s_{c}}||P_{N_{j}}u_{j}||_{L^{2m}_{t}L^{2md/(md-2)}_{x}}\\
&\lesssim ||P_{N_{1}}u_{1}||_{U^{2m}_{\Delta}}\prod_{j=2}^{m}N_{j}^{s_{c}}||P_{N_{j}}u_{j}||_{U^{2m}_{\Delta}}
\end{split}
\end{equation}
for any dyadic numbers $N_{1}$, $\cdots$, $N_{m}\in 2^{\Z}$. 
We use the interpolation between (\ref{U_Multi}) and (\ref{Up_est}) via 
Proposition~\ref{intpol}. Then, we get (\ref{V_Multi}) 
by the same argument of the proof of Corollary 2.21\ (28) in \cite{HHK09}. 
\end{proof}
\begin{lemm}\label{modul_est}
We assume that $(\tau_{0},\xi_{0})$, $(\tau_{1}, \xi_{1})$, $\cdots$, $(\tau_{m}, \xi_{m})\in \R\times \R^{d}$ satisfy 
$\sum_{j=0}^{d}\tau_{j}=0$ and $\sum_{j=0}^{d}\xi_{j}=0$. Then, we have 
\begin{equation}\label{modulation_est}
\max_{0\leq j\leq m}|\tau_{j}+|\xi_{j}|^{2}|
\geq \frac{1}{m+1}\max_{0\leq j\leq m}|\xi_{j}|^{2}. 
\end{equation}
\end{lemm}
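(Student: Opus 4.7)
The plan is to use the linear constraint $\sum_{j=0}^{m}\tau_{j}=0$ together with the fact that summing $m+1$ real numbers gives at most $m+1$ times the largest one in absolute value.

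First I would observe that the assumptions on $(\tau_{j},\xi_{j})$ imply the algebraic identity
\[
\sum_{j=0}^{m}(\tau_{j}+|\xi_{j}|^{2}) \;=\; \sum_{j=0}^{m}\tau_{j} + \sum_{j=0}^{m}|\xi_{j}|^{2} \;=\; \sum_{j=0}^{m}|\xi_{j}|^{2}.
\]
Note that the spatial constraint $\sum_{j=0}^{m}\xi_{j}=0$ is not actually used at this step; the inequality is purely a consequence of $\sum_{j}\tau_{j}=0$. (The paper presumably intends the indexing to run over $0,\ldots,m$, reading the ``$d$'' in the hypothesis as a typo for ``$m$''.)

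Next I would apply the triangle inequality to the left-hand side and then bound the sum by $(m+1)$ times the maximum:
\[
\sum_{j=0}^{m}|\xi_{j}|^{2} \;\leq\; \sum_{j=0}^{m}\bigl|\tau_{j}+|\xi_{j}|^{2}\bigr| \;\leq\; (m+1)\max_{0\leq j\leq m}\bigl|\tau_{j}+|\xi_{j}|^{2}\bigr|.
\]
Meanwhile, since each $|\xi_{j}|^{2}$ is nonnegative, the left-hand side dominates the single largest term:
\[
\sum_{j=0}^{m}|\xi_{j}|^{2} \;\geq\; \max_{0\leq j\leq m}|\xi_{j}|^{2}.
\]
Combining these two inequalities yields the claim
\[
\max_{0\leq j\leq m}\bigl|\tau_{j}+|\xi_{j}|^{2}\bigr| \;\geq\; \frac{1}{m+1}\max_{0\leq j\leq m}|\xi_{j}|^{2}.
\]

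There is essentially no hard step here: the estimate is a pigeonhole on the resonance function $\sum_{j}(\tau_{j}+|\xi_{j}|^{2})$, and the only subtlety is recognizing that the cancellation encoded by the $\tau$-constraint turns $\sum_{j}\tau_{j}$ into zero so that $\sum_{j}(\tau_{j}+|\xi_{j}|^{2})$ collapses to $\sum_{j}|\xi_{j}|^{2}$, a quantity of definite sign. All subsequent steps are the triangle inequality and the trivial bound $\max \leq \sum$.
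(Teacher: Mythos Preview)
Your proof is correct and follows exactly the approach the paper has in mind: the paper's proof consists of the single sentence ``By the triangle inequality, we obtain (\ref{modulation_est}),'' and your argument spells out precisely that computation. Your remark that the constraint $\sum_{j}\xi_{j}=0$ is not needed for the inequality itself is also accurate.
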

\begin{proof}By the triangle inequality, we obtain (\ref{modulation_est}).  
\end{proof}
The following propositions will be used to prove the key estimate for the 
well-posedness in the next section.   
\begin{prop}\label{HL_est}
Let $d\geq 1$, $m\geq 3$, $s_{c}=d/2-1/(m-1)$ and $0<T\leq \infty$. 
For a dyadic number $N_{1}\in 2^{\Z}$, we define the set $S(N_{1})$ as
\[
S(N_{1}):=\{ (N_{2},\cdots ,N_{m})\in (2^{\Z})^{m-1}|N_{1}\gg N_{2}\geq \cdots \geq N_{m}\}. 
\]
If $N_{0}\sim N_{1}$, then we have
\begin{equation}\label{hl}
\begin{split}
&\left|\sum_{S(N_{1})}\int_{0}^{T}\int_{\R^{d}}\left(N_{0}\prod_{j=0}^{m}P_{N_{j}}u_{j}\right)dxdt\right|\\
&\lesssim 
||P_{N_{0}}u_{0}||_{V^{2}_{\Delta}}||P_{N_{1}}u_{1}||_{V^{2}_{\Delta}}\prod_{j=2}^{m}||u_{j}||_{\dot{Y}^{s_{c}}}. 
\end{split}
\end{equation}
\end{prop}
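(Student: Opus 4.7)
My plan is to decompose each factor $P_{N_j} u_j$ by its modulation at the threshold $M = N_1^2/(m+1)$ and then exploit Lemma~\ref{modul_est}. Writing $P_{N_j} u_j = Q_{<M}^\Delta P_{N_j} u_j + Q_{\geq M}^\Delta P_{N_j} u_j$ and expanding the $(m+1)$-fold product, the all-low-modulation term is forced to vanish by Plancherel, since by Lemma~\ref{modul_est} the Fourier constraints $\sum_j \tau_j = 0$ and $\sum_j \xi_j = 0$ are incompatible with simultaneously having every $|\tau_j + |\xi_j|^2| < M$. Every surviving term therefore contains at least one high-modulation factor $Q_{\geq M}^\Delta P_{N_{j^*}} u_{j^*}$, and (\ref{highMproj}) supplies the decisive $L^2_{tx}$-gain $\|Q_{\geq M}^\Delta P_{N_{j^*}} u_{j^*}\|_{L^2_{tx}} \lesssim N_1^{-1}\|P_{N_{j^*}} u_{j^*}\|_{V^2_\Delta}$, which cancels the derivative factor $N_0 \sim N_1$ on the left-hand side.

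I split cases according to which index $j^*$ carries the high modulation. If $j^* \in \{0,1\}$, say $j^* = 0$, I apply Cauchy--Schwarz in $L^2_{tx}$ isolating $Q_{\geq M}^\Delta P_{N_0} u_0$; (\ref{highMproj}) supplies the $N_1^{-1}$, while (\ref{V_Multi}) handles the remaining $m$-fold product, since $N_1 \gg N_2 \geq \cdots \geq N_m$, producing the target exponent $s_c = d/2 - 1/(m-1)$. The harder case is $j^* \geq 2$, where neither $F_0 = P_{N_0} u_0$ nor $F_1 = P_{N_1} u_1$ has the modulation gain; since they share the maximal frequency, (\ref{V_Multi}) cannot be applied to any pair containing both. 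I pair $F_0$ with $Q_{\geq M}^\Delta P_{N_{j^*}} u_{j^*}$ (which are strictly frequency-separated, so (\ref{V_Multi}) with two factors, or a H\"older--Bernstein step combined with (\ref{highMproj}), applies) and pair $F_1$ with the $(m-1)$-fold product of the remaining low-frequency factors, to which (\ref{V_Multi}) also applies.

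The main technical obstacle lies in this last step: applying (\ref{V_Multi}) to $k$-fold subproducts with $k<m$ yields the exponent $d/2 - 1/(k-1)$ rather than the target $s_c = d/2 - 1/(m-1)$, so the mismatch must be absorbed into the $(N_j/N_1)^\delta$-gains by choosing $\delta$ close to $1/(2(m-1))$; the low-dimensional edge $d=1,\,m=3$ requires separate treatment since (\ref{V_Multi}) with two factors is unavailable when $m+d<4$. Once the pointwise dyadic estimate is established, the summation over $(N_2,\ldots,N_m) \in S(N_1)$ closes by Cauchy--Schwarz in each $N_j$, using the geometric decay $\sum_{N_j \ll N_1} (N_j/N_1)^{2\delta} \lesssim 1$ to convert $\sum_{N_j} N_j^{s_c}(N_j/N_1)^\delta \|P_{N_j} u_j\|_{V^2_\Delta}$ into $\|u_j\|_{\dot{Y}^{s_c}}$, yielding (\ref{hl}).
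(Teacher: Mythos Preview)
Your overall strategy---the modulation decomposition at threshold $M\sim N_1^2$, elimination of the all-low piece via Lemma~\ref{modul_est}, and the handling of $j^*\in\{0,1\}$ by Cauchy--Schwarz together with (\ref{highMproj}) and (\ref{V_Multi})---matches the paper and is correct.

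The gap is in your treatment of $j^*\geq 2$. Applying (\ref{V_Multi}) to the $(m-1)$-fold subproduct $F_1\cdot\prod_{j\geq 2,\,j\neq j^*}P_{N_j}u_j$ yields the exponent $d/2-1/(m-2)$, so each low-frequency factor carries a deficit $N_j^{-1/((m-1)(m-2))}$ relative to $N_j^{s_c}$. Absorbing this into the $(N_j/N_1)^\delta$ gain requires $\delta>1/((m-1)(m-2))$, while the constraint in (\ref{V_Multi}) for an $(m-1)$-fold product is $\delta<1/(2(m-2))$; these are compatible only when $m\geq 4$. For $m=3$---which is part of the hypothesis---the deficit is $N_j^{-1/2}$ while $\delta<1/2$, and the sum over small $N_j$ diverges. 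This failure occurs for \emph{every} $d$, not just $d=1$, so the edge case you isolate is the wrong one. A second difficulty is that your H\"older--Bernstein treatment of the pair $(F_0,\,Q_{\geq M}^\Delta P_{N_{j^*}}u_{j^*})$ extracts the $N_1^{-1}$ gain but supplies no $(N_{j^*}/N_1)^\delta$ decay, so the $N_{j^*}$-summation cannot be closed by the geometric-decay mechanism you describe.

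The paper avoids both problems by abandoning (\ref{V_Multi}) altogether in the $j^*\geq 2$ case and using a direct H\"older splitting: $u_0$ and $u_1$ are placed in the Strichartz space $L^4_tL^{2d/(d-1)}_x$ (controlled via (\ref{U_Stri}) and $V^2_\Delta\hookrightarrow U^4_\Delta$), the high-modulation factor $\sum_{N_k}Q_{\geq M}^\Delta P_{N_k}u_k$ is placed in $L^2_tL^{(m-1)d}_x$ with the $N_k$-sum moved \emph{inside} the norm (then Sobolev $\dot H^{s_c}\hookrightarrow L^{(m-1)d}$, $L^2$-orthogonality, and (\ref{highMproj}) give $N_1^{-1}\|u_k\|_{\dot Y^{s_c}}$), and each remaining factor $\sum_{N_j}P_{N_j}u_j$ is placed in $L^\infty_tL^{(m-1)d}_x$. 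Because the Sobolev embedding produces $s_c$ directly, no exponent mismatch arises and the argument is uniform in $m\geq 3$, $d\geq 1$.
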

\begin{proof} 
We define $u_{j,N_{j},T}:=\ee_{[0,T)}P_{N_{j}}u_{j}$\ $(j=1,\cdots ,m)$ and put 
$M:=N_{0}^{2}/4(m+1)$. We decompose $Id=Q^{\Delta}_{<M}+Q^{\Delta}_{\geq M}$. 
We divide the integrals on the left-hand side of (\ref{hl}) into $2^{m+1}$ piece of the form 
\begin{equation}\label{piece_form_hl}
\int_{\R}\int_{\R^{d}}\left(N_{0}\prod_{j=0}^{m}Q_{j}^{\Delta}u_{j,N_{j},T}\right) dxdt
\end{equation}
with $Q_{j}^{\Delta}\in \{Q_{\geq M}^{\Delta}, Q_{<M}^{\Delta}\}$\ $(j=0,\cdots ,m)$. 
By the Plancherel's theorem, we have
\[
(\ref{piece_form_hl})
= c\int_{\sum_{j=0}^{m}\tau_{j}=0}\int_{\sum_{j=0}^{m}\xi_{j}=0}N_{0}\prod_{j=0}^{m}\FT[Q_{j}^{\Delta}u_{j,N_{j},T}](\tau_{j},\xi_{j}),
\]
where $c$ is a constant. Therefore, Lemma~\ref{modul_est} implies that
\[
\int_{\R}\int_{\R^{d}}\left(N_{0}\prod_{j=0}^{m}Q_{<M}^{\Delta}u_{j,N_{j},T}\right) dxdt=0.
\]
So, let us now consider the case that $Q_{j}^{\Delta}=Q_{\geq M}^{\Delta}$ for some $0\leq j\leq m$. 

First, we consider the case $Q_{0}^{\Delta}=Q_{\geq M}^{\Delta}$.  
By the Cauchy-Schwartz inequality, we have
\[
\begin{split}
&\left|\sum_{S(N_{1})}\int_{\R}\int_{\R^{d}}\left(N_{0}Q_{\geq M}^{\Delta}u_{0,N_{0},T}\prod_{j=1}^{m}Q_{j}^{\Delta}u_{j,N_{j},T}\right)dxdt\right|\\
&\leq \sum_{S(N_{1})}N_{0}||Q_{\geq M}^{\Delta}u_{0,N_{0},T}||_{L^{2}_{tx}}\left|\left|\prod_{j=1}^{m}Q_{j}^{\Delta}u_{j,N_{j},T}\right|\right|_{L^{2}_{tx}}. 
\end{split}
\]
Furthermore by (\ref{highMproj}) and $M\sim N_{0}^{2}$, we have
\[
||Q_{\geq M}^{\Delta}u_{0,N_{0},T}||_{L^{2}_{tx}}
\lesssim N_{0}^{-1}||u_{0,N_{0},T}||_{V^{2}_{\Delta}}.
\]
While by (\ref{V_Multi}), (\ref{Vproj})
and the Cauchy-Schwartz inequality for the dyadic sum, we have
\[
\begin{split}
\sum_{S(N_{1})}\left|\left|\prod_{j=1}^{m}Q_{j}^{\Delta}u_{j,N_{j},T}\right|\right|_{L^{2}_{tx}}
&\lesssim ||u_{1,N_{1},T}||_{V^{2}_{\Delta}}\sum_{S(N_{1})}\prod_{j=2}^{m}\left(\frac{N_{j}}{N_{1}}\right)^{\delta}N_{j}^{s_{c}}||u_{j,N_{j},T}||_{V^{2}_{\Delta}}\\
&\lesssim ||u_{1,N_{1},T}||_{V^{2}_{\Delta}}\prod_{j=2}^{m}\left(\sum_{N_{j}\leq N_{1}}N_{j}^{2s_{c}}||u_{j,N_{j},T}||_{V^{2}_{\Delta}}^{2}\right)^{1/2}.
\end{split}
\]
Therefore, we obtain
\[
\begin{split}
&\left|\sum_{S(N_{1})}\int_{\R}\int_{\R^{d}}\left(N_{0}Q_{\geq M}^{\Delta}u_{0,N_{0},T}\prod_{j=1}^{m}Q_{j}^{\Delta}u_{j,N_{j},T}\right)dxdt\right|\\
&\lesssim 
||P_{N_{0}}u_{0}||_{V^{2}_{\Delta}}||P_{N_{1}}u_{1}||_{V^{2}_{\Delta}}\prod_{j=2}^{m}||u_{j}||_{\dot{Y}^{s_{c}}}
\end{split}
\]
since $||\ee_{[0,T)}u||_{V^{2}_{\Delta}}\lesssim ||u||_{V^{2}_{\Delta}}$ for any $T\in (0,\infty]$. 
For the case $Q_{1}^{\Delta}=Q_{\geq M}^{\Delta}$ is proved in same way. 

Next, we consider the case $Q_{k}^{\Delta}=Q_{\geq M}^{\Delta}$ for some $2\leq k\leq m$. 
By the H\"older's inequality, we have
\[
\begin{split}
&\left|\sum_{S(N_{1})}\int_{\R}\int_{\R^{d}}\left(N_{0}Q_{\geq M}^{\Delta}u_{k,N_{k},T}\prod_{\substack{j=0\\ j\neq k}}^{m}Q_{j}^{\Delta}u_{j,N_{j},T}\right)dxdt\right|\\
&\lesssim N_{0}||Q_{0}^{\Delta}u_{0,N_{0},T}||_{L_{t}^{4}L_{x}^{2d/(d-1)}}||Q_{1}^{\Delta}u_{1,N_{1},T}||_{L_{t}^{4}L_{x}^{2d/(d-1)}}\\
&\ \ \ \ \times \left|\left| \sum_{N_{k}}Q_{\geq M}^{\Delta}u_{k,N_{k},T}\right|\right|_{L_{t}^{2}L_{x}^{(m-1)d}}
\prod_{\substack{j=2\\ j\neq k}}^{m}\left|\left| \sum_{N_{j}}Q_{j}^{\Delta}u_{j,N_{j},T}\right|\right|_{L_{t}^{\infty}L_{x}^{(m-1)d}}. 
\end{split}
\]
By (\ref{U_Stri}), the embedding $V^{2}_{\Delta}\hookrightarrow U^{4}_{\Delta}$ and (\ref{Vproj}), we have
\[
||Q_{0}^{\Delta}u_{0,N_{0},T}||_{L_{t}^{4}L_{x}^{2d/(d-1)}}||Q_{1}^{\Delta}u_{1,N_{1},T}||_{L_{t}^{4}L_{x}^{2d/(d-1)}}
\lesssim ||u_{0,N_{0},T}||_{V^{2}_{\Delta}}||u_{1,N_{1},T}||_{V^{2}_{\Delta}}.
\]
While by the Sobolev embedding $\dot{H}^{s_{c}}(\R^{d})\hookrightarrow L^{(m-1)d}(\R^{d})$, $L^{2}$ orthogonality and (\ref{highMproj}), we have
\[
\begin{split}
\left|\left| \sum_{N_{k}}Q_{\geq M}^{\Delta}u_{k,N_{k},T}\right|\right|_{L_{t}^{2}L_{x}^{(m-1)d}}
&\lesssim \left(\sum_{N_{k}}N_{k}^{2s_{c}}||Q_{\geq M}^{\Delta}u_{k,N_{k},T}||_{L_{tx}^{2}}^{2}\right)^{1/2}\\
&\lesssim N_{0}^{-1}\left(\sum_{N_{k}}N_{k}^{2s_{c}}||u_{k,N_{k},T}||_{V^{2}_{\Delta}}^{2}\right)^{1/2}
\end{split}
\]
since $M\sim N_{0}^{2}$. Furthermore by the Sobolev embedding $\dot{H}^{s_{c}}(\R^{d})\hookrightarrow L^{(m-1)d}(\R^{d})$, $L^{2}$ orthogonality, 
$V^{2}_{\Delta}\hookrightarrow L^{\infty}(\R;L^{2})$ and (\ref{Vproj}), we have
\[
\begin{split}
\left|\left| \sum_{N_{j}}Q_{j}^{\Delta}u_{j,N_{j},T}\right|\right|_{L_{t}^{\infty}L_{x}^{(m-1)d}}
&\lesssim \left(\sum_{N_{j}}N_{j}^{2s_{c}}||Q_{j}^{\Delta}u_{j,N_{j},T}||_{L_{t}^{\infty}L_{x}^{2}}^{2}\right)^{1/2}\\
&\lesssim \left(\sum_{N_{j}}N_{j}^{2s_{c}}||u_{j,N_{j},T}||_{V^{2}_{\Delta}}^{2}\right)^{1/2}.
\end{split}
\] 
As a result, we obtain
\[
\begin{split}
&\left|\sum_{S(N_{1})}\int_{\R}\int_{\R^{d}}\left(N_{0}Q_{\geq M}^{\Delta}u_{k,N_{k},T}\prod_{\substack{j=0\\ j\neq k}}^{m}Q_{j}^{\Delta}u_{j,N_{j},T}\right)dxdt\right|\\
&\lesssim 
||P_{N_{0}}u_{0}||_{V^{2}_{\Delta}}||P_{N_{1}}u_{1}||_{V^{2}_{\Delta}}\prod_{j=2}^{m}||u_{j}||_{\dot{Y}^{s_{c}}}
\end{split}
\]
since $||\ee_{[0,T)}u||_{V^{2}_{\Delta}}\lesssim ||u||_{V^{2}_{\Delta}}$ for any $T\in (0,\infty]$. 

\end{proof}
\begin{prop}\label{HH_est}
Let $d\geq 1$, $m\geq 3$, $s_{c}=d/2-1/(m-1)$ and $0<T\leq \infty$. 
For a dyadic number $N_{2}\in 2^{\Z}$, we define the set $S_{*}(N_{2})$ as
\[
S_{*}(N_{2}):=\{ (N_{3},\cdots ,N_{m})\in (2^{\Z})^{m-2}|N_{2}\geq N_{3}\geq \cdots \geq N_{m}\}. 
\]
If $N_{0}\lesssim N_{1}\sim N_{2}$, then we have
\begin{equation}\label{hh}
\begin{split}
&\left|\sum_{S_{*}(N_{2})}\int_{0}^{T}\int_{\R^{d}}\left(N_{0}\prod_{j=0}^{m}P_{N_{j}}u_{j}\right)dxdt\right|\\
&\lesssim 
\frac{N_{0}}{N_{1}}||P_{N_{0}}u_{0}||_{V^{2}_{\Delta}}||P_{N_{1}}u_{1}||_{V^{2}_{\Delta}}N_{2}^{s_{c}}||P_{N_{2}}u_{2}||_{V^{2}_{\Delta}}\prod_{j=3}^{m}||u_{j}||_{\dot{Y}^{s_{c}}}. 
\end{split}
\end{equation}
\end{prop}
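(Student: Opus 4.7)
The plan is to follow the template of Proposition~\ref{HL_est}, adapted to the two comparable top frequencies $N_{1} \sim N_{2}$. I set $M := N_{1}^{2}/(4(m+1))$ and decompose $Id = Q_{<M}^{\Delta} + Q_{\geq M}^{\Delta}$, expanding the integrand into $2^{m+1}$ pieces. Lemma~\ref{modul_est} (applied with maximum frequency $\sim N_{1}$) forces the piece with all $Q_{j}^{\Delta} = Q_{<M}^{\Delta}$ to vanish, so at least one factor carries $Q_{\geq M}^{\Delta}$. I split into cases by which index $j \in \{0, 1, \ldots, m\}$ holds the high modulation; the three representative cases are (A) $j = 0$, (B) $j \in \{1, 2\}$, and (C) $j = k \in \{3, \ldots, m\}$.

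In each case I apply a single H\"older estimate with the following scheme: the $Q_{\geq M}^{\Delta}$-factor is placed in $L^{2}_{t}L^{(m-1)d}_{x}$ (reached from $L^{2}_{tx}$ by Bernstein at cost $N_{j}^{s_{c}}$); the remaining top factors among $P_{N_{0}}, P_{N_{1}}, P_{N_{2}}$ are distributed between Strichartz $L^{4}_{t}L^{2d/(d-1)}_{x}$ and Sobolev $L^{\infty}_{t}L^{(m-1)d}_{x}$; and each low factor $P_{N_{j}}$ ($j \geq 3$) is placed in $L^{\infty}_{t}L^{(m-1)d}_{x}$. A direct check verifies that the H\"older exponents sum to $(1, 1)$ in (time, space). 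The crucial gain $M^{-1/2} \sim N_{1}^{-1}$ from (\ref{highMproj}), combined with the $N_{0}$ out front, produces the target factor $N_{0}/N_{1}$.

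To handle the summation over $(N_{3}, \ldots, N_{m}) \in S_{*}(N_{2})$, I drop the ordering constraint (overcounting by at most $(m-2)!$) so that dyadic sums for distinct indices become independent. For each $j \geq 3$, Littlewood--Paley orthogonality combined with the Sobolev embedding $\dot{H}^{s_{c}}(\R^{d}) \hookrightarrow L^{(m-1)d}(\R^{d})$ yields
\[
\Big\| \sum_{N_{j} \leq N_{2}} Q_{j}^{\Delta} u_{j, N_{j}, T} \Big\|_{L^{\infty}_{t}L^{(m-1)d}_{x}} \lesssim \Big( \sum_{N_{j}} N_{j}^{2 s_{c}} \|P_{N_{j}} u_{j}\|_{V^{2}_{\Delta}}^{2} \Big)^{1/2} = \|u_{j}\|_{\dot{Y}^{s_{c}}},
\]
and in case (C) the analogue in $L^{2}_{t}L^{(m-1)d}_{x}$ combined with the modulation estimate produces $N_{1}^{-1} \|u_{k}\|_{\dot{Y}^{s_{c}}}$, avoiding any logarithmic loss in the dyadic summation.

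The hardest case is (A): Bernstein applied to $P_{N_{0}}$ brings a factor $N_{0}^{s_{c}}$ rather than the desired $N_{2}^{s_{c}}$, so the bound naturally reads $(N_{0}^{1+s_{c}}/N_{1}) \|P_{N_{0}} u_{0}\|_{V^{2}_{\Delta}} \|P_{N_{1}} u_{1}\|_{V^{2}_{\Delta}} \|P_{N_{2}} u_{2}\|_{V^{2}_{\Delta}} \prod_{j \geq 3} \|u_{j}\|_{\dot{Y}^{s_{c}}}$, exceeding the target by the factor $(N_{0}/N_{2})^{s_{c}}$. This factor is $\leq 1$ precisely because $s_{c} \geq 0$ (since $(m-1)d \geq 2$ for $m \geq 3$, $d \geq 1$) and because $N_{0} \leq N_{2}$ by hypothesis. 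In case (B) Bernstein on $P_{N_{1}}$ or $P_{N_{2}}$ yields $N_{1}^{s_{c}}$ or $N_{2}^{s_{c}}$, matching the target via $N_{1} \sim N_{2}$; in case (C) the $N_{2}^{s_{c}}$ factor appears directly from Sobolev on the $P_{N_{2}}$ slot, and summing over the $O(1)$ choices of $k$ only costs a constant.
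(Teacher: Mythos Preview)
Your proof is correct and follows essentially the same route as the paper: the same choice $M=N_{1}^{2}/4(m+1)$, the same elimination of the all-low-modulation piece via Lemma~\ref{modul_est}, and the same H\"older scheme placing the high-modulation factor in $L^{2}_{t}L^{(m-1)d}_{x}$, two top-frequency factors in the Strichartz space $L^{4}_{t}L^{2d/(d-1)}_{x}$, and the remaining factors in $L^{\infty}_{t}L^{(m-1)d}_{x}$. You are in fact slightly more careful than the paper in that you make explicit the step $N_{0}^{s_{c}}\lesssim N_{2}^{s_{c}}$ in case (A), which rests on $s_{c}\ge 0$ (valid since $m\ge 3$, $d\ge 1$); the paper silently absorbs this into the displayed inequality $N_{0}^{s_{c}}\|Q^{\Delta}_{\ge M}u_{0,N_{0},T}\|_{L^{2}_{tx}}\lesssim N_{1}^{-1}N_{2}^{s_{c}}\|u_{0,N_{0},T}\|_{V^{2}_{\Delta}}$.
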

\begin{proof} 
We define $u_{j,N_{j},T}:=\ee_{[0,T)}P_{N_{j}}u_{j}$\ $(j=1,\cdots ,m)$ and put 
$M:=N_{1}^{2}/4(m+1)$. We decompose $Id=Q^{\Delta}_{<M}+Q^{\Delta}_{\geq M}$. 
We divide the integrals on the left-hand side of (\ref{hh}) into $2^{m+1}$ piece of the form 
\begin{equation}\label{piece_form_hh}
\int_{\R}\int_{\R^{d}}\left(N_{0}\prod_{j=0}^{m}Q_{j}^{\Delta}u_{j,N_{j},T}\right) dxdt
\end{equation}
with $Q_{j}^{\Delta}\in \{Q_{\geq M}^{\Delta}, Q_{<M}^{\Delta}\}$\ $(j=0,\cdots ,m)$. 
By the Plancherel's theorem, we have
\[
(\ref{piece_form_hh})
= c\int_{\sum_{j=0}^{m}\tau_{j}=0}\int_{\sum_{j=0}^{m}\xi_{j}=0}N_{0}\prod_{j=0}^{m}\FT[Q_{j}^{\Delta}u_{j,N_{j},T}](\tau_{j},\xi_{j}),
\]
where $c$ is a constant. Therefore, Lemma~\ref{modul_est} implies that
\[
\int_{\R}\int_{\R^{d}}\left(N_{0}\prod_{j=0}^{m}Q_{<M}^{\Delta}u_{j,N_{j},T}\right) dxdt=0.
\]
So, let us now consider the case that $Q_{j}^{\Delta}=Q_{\geq M}^{\Delta}$ for some $0\leq j\leq m$. 

We consider only for the case $Q_{0}^{\Delta}=Q_{\geq M}^{\Delta}$ since 
the case $Q_{1}^{\Delta}=Q_{\geq M}^{\Delta}$ is similar argument and 
the cases $Q_{k}^{\Delta}=Q_{\geq M}^{\Delta}$ $(k=2,\cdots ,m)$ are similar to the argument in the proof of Proposition~\ref{HL_est}.  
By the H\"older's inequality and we have
\[
\begin{split}
&\left|\sum_{S_{*}(N_{2})}\int_{\R}\int_{\R^{d}}\left(N_{0}Q_{\geq M}^{\Delta}u_{0,N_{0},T}\prod_{\substack{j=1}}^{m}Q_{j}^{\Delta}u_{j,N_{j},T}\right)dxdt\right|\\
&\lesssim N_{0}||Q_{\geq M}^{\Delta}u_{0,N_{0},T}||_{L_{t}^{2}L_{x}^{(m-1)d}}||Q_{1}^{\Delta}u_{1,N_{1},T}||_{L_{t}^{4}L_{x}^{2d/(d-1)}}||Q_{2}^{\Delta}u_{2,N_{2},T}||_{L_{t}^{4}L_{x}^{2d/(d-1)}}\\
&\ \ \ \ \times \prod_{j=3}^{m}\left|\left| \sum_{N_{j}}Q_{j}^{\Delta}u_{j,N_{j},T}\right|\right|_{L_{t}^{\infty}L_{x}^{(m-1)d}}. 
\end{split}
\]
By the Sobolev embedding $\dot{H}^{s_{c}}(\R^{d})\hookrightarrow L^{(m-1)d}(\R^{d})$ and (\ref{highMproj}), we have
\[
\begin{split}
\left|\left|Q_{\geq M}^{\Delta}u_{0,N_{0},T}\right|\right|_{L_{t}^{2}L_{x}^{(m-1)d}}
&\lesssim N_{0}^{s_{c}}||Q_{\geq M}^{\Delta}u_{0,N_{0},T}||_{L_{tx}^{2}}\\
&\lesssim N_{1}^{-1}N_{2}^{s_{c}}||u_{0,N_{0},T}||_{V^{2}_{\Delta}}
\end{split}
\]
since $M\sim N_{1}^{2}$ and $N_{0}\lesssim N_{2}$. 
While by (\ref{U_Stri}), the embedding $V^{2}_{\Delta}\hookrightarrow U^{4}_{\Delta}$ and (\ref{Vproj}), we have
\[
||Q_{1}^{\Delta}u_{1,N_{1},T}||_{L_{t}^{4}L_{x}^{2d/(d-1)}}||Q_{2}^{\Delta}u_{2,N_{2},T}||_{L_{t}^{4}L_{x}^{2d/(d-1)}}
\lesssim ||u_{1,N_{1},T}||_{V^{2}_{\Delta}}||u_{2,N_{2},T}||_{V^{2}_{\Delta}}.
\]
Furthermore by the Sobolev embedding $\dot{H}^{s_{c}}(\R^{d})\hookrightarrow L^{(m-1)d}(\R^{d})$, $L^{2}$ orthogonality, 
$V^{2}_{\Delta}\hookrightarrow L^{\infty}(\R;L^{2})$ and (\ref{Vproj}), we have
\[
\begin{split}
\left|\left| \sum_{N_{j}}Q_{j}^{\Delta}u_{j,N_{j},T}\right|\right|_{L_{t}^{\infty}L_{x}^{(m-1)d}}
&\lesssim \left(\sum_{N_{j}}N_{j}^{2s_{c}}||Q_{j}^{\Delta}u_{j,N_{j},T}||_{L_{t}^{\infty}L_{x}^{2}}^{2}\right)^{1/2}\\
&\lesssim \left(\sum_{N_{j}}N_{j}^{2s_{c}}||u_{j,N_{j},T}||_{V^{2}_{\Delta}}^{2}\right)^{1/2}.
\end{split}
\] 
As a result, we obtain
\[
\begin{split}
&\left|\sum_{S_{*}(N_{2})}\int_{\R}\int_{\R^{d}}\left(N_{0}Q_{\geq M}^{\Delta}u_{0,N_{0},T}\prod_{j=1}^{m}Q_{j}^{\Delta}u_{j,N_{j},T}\right)dxdt\right|\\
&\lesssim 
\frac{N_{0}}{N_{1}}||P_{N_{0}}u_{0}||_{V^{2}_{\Delta}}||P_{N_{1}}u_{1}||_{V^{2}_{\Delta}}N_{2}^{s_{c}}||P_{N_{2}}u_{2}||_{V^{2}_{\Delta}}\prod_{j=2}^{m}||u_{j}||_{\dot{Y}^{s_{c}}}
\end{split}
\]
since $||\ee_{[0,T)}u||_{V^{2}_{\Delta}}\lesssim ||u||_{V^{2}_{\Delta}}$ for any $T\in (0,\infty]$. 
%
\end{proof}
%
%
\section{Proof of the well-posedness and the scattering \label{proof_thm}}\kuuhaku
In this section, we prove Theorem~\ref{wellposed_1} and Corollary~\ref{sccat}. 
We define the map $\Phi_{T, \varphi}$ as
\[
\Phi_{T, \varphi}(u)(t):=e^{it\Delta}\varphi -iI_{T}(u,\cdots, u)(t),
\] 
where
\[
I_{T}(u_{1},\cdots u_{m})(t):=\int_{0}^{t}\ee_{[0,T)}(t')e^{i(t-t')\Delta}\partial_{k}\left(\prod_{j=1}^{m}\overline{u_{j}(t')}\right)dt'.
\]
To prove the well-posedness of (\ref{mDNLS}), we prove that $\Phi_{T, \varphi}$ is a contraction map 
on a closed subset of $\dot{Z}^{s}([0,T])$ or $Z^{s}([0,T])$. 
Key estimate is the following:
\begin{prop}\label{Duam_est}
We assume $d\geq 1$, $m\geq 3$. Then for $s_{c}=d/2-1/(m-1)$ and any $0<T\leq \infty$, we have
\begin{equation}\label{Duam_est_1}
||I_{T}(u_{1},\cdots u_{m})||_{\dot{Z}^{s_{c}}}\lesssim \prod_{j=1}^{m}||u_{j}||_{\dot{Y}^{s_{c}}}.
\end{equation}
\end{prop}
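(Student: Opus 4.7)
The plan is to apply duality (Theorem~\ref{duality}) to the $U^2_\Delta$ norm of each dyadic block of $I_T$, perform a standard case split on the frequency interactions, and invoke Propositions~\ref{HL_est} and~\ref{HH_est} respectively. By definition $\|I_T(u_1,\dots,u_m)\|_{\dot Z^{s_c}}^2=\sum_{N_0}N_0^{2s_c}\|P_{N_0}I_T\|_{U^2_\Delta}^2$, so I reduce to estimating each dyadic block. Because $I_T$ vanishes for $t\le 0$ and $e^{-it\Delta}P_{N_0}I_T$ is absolutely continuous with derivative $\mathbf{1}_{[0,T)}(t)e^{-it\Delta}P_{N_0}\partial_k\prod_j\overline{u_j}$ and limit $0$ at $-\infty$, Theorem~\ref{duality}, followed by an integration by parts in $x$ and a dyadic decomposition of each factor, reduces the task to bounding
\[
N_0\sup_{\|w\|_{V^2_\Delta}\le 1}\,\sum_{N_1,\dots,N_m}\Bigl|\int_0^T\!\!\int_{\R^d}\widetilde P_{N_0}w\,\prod_{j=1}^m P_{N_j}\overline{u_j}\,dx\,dt\Bigr|,
\]
where $\widetilde P_{N_0}$ is a fattened frequency projection. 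By symmetry I may assume $N_1\ge N_2\ge\dots\ge N_m$, and the frequency-balance condition confines the sum to the high-low regime $N_1\gg N_2$ with $N_0\sim N_1$ and the high-high regime $N_1\sim N_2$ with $N_0\lesssim N_1$.

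For the high-low regime, I apply Proposition~\ref{HL_est} with $u_0=\widetilde P_{N_0}w$ to perform the inner sum over $N_2,\dots,N_m$, which yields
\[
\|P_{N_0}I_T^{HL}\|_{U^2_\Delta}\lesssim\sum_{N_1\sim N_0}\|P_{N_1}u_1\|_{V^2_\Delta}\prod_{j=2}^m\|u_j\|_{\dot Y^{s_c}},
\]
using that $\|\widetilde P_{N_0}w\|_{V^2_\Delta}\lesssim\|w\|_{V^2_\Delta}\le 1$. Since only $O(1)$ values of $N_1$ satisfy $N_1\sim N_0$, multiplying by $N_0^{s_c}$, using $N_0\sim N_1$ and taking the $\ell^2_{N_0}$ norm produces $\|u_1\|_{\dot Y^{s_c}}\prod_{j\ge 2}\|u_j\|_{\dot Y^{s_c}}$ directly.

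The main obstacle is the high-high regime, where $N_0$ can be much smaller than $N_1$ and the summability of the $N_0/N_1$ gain from Proposition~\ref{HH_est} must be extracted carefully. After that proposition (applied with $u_0=\widetilde P_{N_0}w$) and the collapse $N_2\sim N_1$, I obtain
\[
\|P_{N_0}I_T^{HH}\|_{U^2_\Delta}\lesssim\sum_{N_1\gtrsim N_0}\frac{N_0}{N_1}\,N_1^{s_c}\|P_{N_1}u_1\|_{V^2_\Delta}\|P_{N_1}u_2\|_{V^2_\Delta}\prod_{j=3}^m\|u_j\|_{\dot Y^{s_c}}.
\]
To control this in $\ell^2_{N_0}$ weighted by $N_0^{s_c}$, I rewrite the dyadic factor as $N_0^{s_c}\cdot(N_0/N_1)\cdot N_1^{s_c}=(N_0/N_1)^{s_c+1}N_1^{2s_c}$. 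The kernel $K(N_0,N_1)=(N_0/N_1)^{s_c+1}\mathbf{1}_{\{N_0\le N_1\}}$ has bounded row and column sums on the dyadic lattice because $s_c+1\ge 1>0$, so Schur's test controls the $\ell^2_{N_0}$ norm by the $\ell^2_{N_1}$ norm of $N_1^{2s_c}\|P_{N_1}u_1\|_{V^2_\Delta}\|P_{N_1}u_2\|_{V^2_\Delta}$. Viewing this as the $\ell^2_{N_1}$ norm of the pointwise product of the two $\ell^2$ sequences $(N_1^{s_c}\|P_{N_1}u_i\|_{V^2_\Delta})_{N_1}$, the embedding $\ell^2\hookrightarrow\ell^\infty$ bounds it by $\|u_1\|_{\dot Y^{s_c}}\|u_2\|_{\dot Y^{s_c}}$. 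Combining the two regimes yields the proposition. One further item worth verifying is that Propositions~\ref{HL_est} and~\ref{HH_est} remain valid with complex-conjugated factors, which is immediate since Lemma~\ref{modul_est} and the Strichartz/Sobolev ingredients used in their proofs are invariant under $u_j\mapsto\overline{u_j}$.
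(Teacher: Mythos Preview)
Your proof is correct and follows essentially the same strategy as the paper: duality via Theorem~\ref{duality}, a split into the high--low regime $N_1\gg N_2$ (handled by Proposition~\ref{HL_est}) and the high--high regime $N_1\sim N_2$ (handled by Proposition~\ref{HH_est}), followed by dyadic summation. The only cosmetic differences are that the paper proves the slightly more general bound~(\ref{Duam_est_2}) for all $s\ge 0$ before specializing to $s=s_c$, and in the high--high case the paper pulls the $(N_1,N_2)$-sum outside the $\ell^2_{N_0}$ norm by the triangle inequality and then sums $\sum_{N_0\lesssim N_1}N_0^{2s}(N_0/N_1)^2\lesssim N_1^{2s}$ directly, whereas you keep the $N_1$-sum inside and use Schur's test; these are equivalent standard maneuvers. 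Your closing remark about complex conjugation is harmless but in fact unnecessary: the duality pairing $(u',v)_{L^2}$ already introduces a conjugate on the test function, so the integrand becomes $\overline{u_0}\cdot\overline{u_1}\cdots\overline{u_m}=\overline{u_0u_1\cdots u_m}$, and taking absolute values reduces exactly to the form $\bigl|\int\prod_{j=0}^m P_{N_j}u_j\,dx\,dt\bigr|$ to which Propositions~\ref{HL_est} and~\ref{HH_est} apply verbatim.
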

\begin{proof}
We show the estimate
\begin{equation}\label{Duam_est_2}
||I_{T}(u_{1},\cdots u_{m})||_{\dot{Z}^{s}}\lesssim \sum_{k=1}^{m}\left(||u_{k}||_{\dot{Y}^{s}}\prod_{\substack{j=1\\ j\neq k}}^{m}||u_{j}||_{\dot{Y}^{s_{c}}}\right)
\end{equation}
for $s\geq 0$. (\ref{Duam_est_1}) follows from (\ref{Duam_est_2}) with $s=s_{c}$. 
We decompose
\[
I_{T}(u_{1},\cdots u_{m})=\sum_{N_{1},\cdots ,N_{m}}I_{T}(P_{N_{1}}u_{1},\cdots P_{N_{m}}u_{m}).
\]
By symmetry, it is enough to consider the summation for $N_{1}\geq \cdots \geq N_{m}$. We put
\[
\begin{split}
S_{1}&:=\{ (N_{1},\cdots ,N_{m})\in (2^{\Z})^{m}|N_{1}\gg N_{2}\geq \cdots \geq N_{m}\}\\
S_{2}&:=\{ (N_{1},\cdots ,N_{m})\in (2^{\Z})^{m}|N_{1}\sim N_{2}\geq \cdots \geq N_{m}\}
\end{split}
\]
and
\[
J_{k}:=\left|\left| \sum_{S_{k}}I_{T}(P_{N_{1}}u_{1},\cdots P_{N_{m}}u_{m})\right|\right|_{\dot{Z}^{s}}\ (k=1,2).
\]

First, we prove the estimate for $J_{1}$. By Theorem~\ref{duality} and the Plancherel's theorem, we have
\[
\begin{split}
J_{1}&\leq \left\{ \sum_{N_{0}}N_{0}^{2s}\left|\left| e^{-it\Delta}P_{N_{0}}\sum_{S_{1}}I_{T}(P_{N_{1}}u_{1},\cdots P_{N_{m}}u_{m})\right|\right|_{U^{2}}^{2}\right\}^{1/2}\\
&\lesssim \left\{\sum_{N_{0}}N_{0}^{2s}\sum_{N_{1}\sim N_{0}}
\left( \sup_{||u_{0}||_{V^{2}_{\Delta}}=1}\left|\sum_{S(N_{1})}\int_{0}^{T}\int_{\R^{d}}\left(N_{0}\prod_{j=0}^{m}P_{N_{j}}u_{j}\right)dxdt\right|\right)^{2}\right\}^{1/2}. 
\end{split}
\]
Therefore by Proposition~\ref{HL_est}, we have
\[
\begin{split}
J_{1}&\lesssim \left\{\sum_{N_{0}}N_{0}^{2s}\sum_{N_{1}\sim N_{0}}
\left( \sup_{||u_{0}||_{V^{2}_{\Delta}}=1}||P_{N_{0}}u_{0}||_{V^{2}_{\Delta}}||P_{N_{1}}u_{1}||_{V^{2}_{\Delta}}\prod_{j=2}^{m}||u_{j}||_{\dot{Y}^{s_{c}}}\right)^{2}\right\}^{1/2}\\
&\lesssim 
\left(\sum_{N_{1}}N_{1}^{2s}||P_{N_{1}}u_{1}||_{V^{2}_{\Delta}}^{2}\right)^{1/2}
\prod_{j=2}^{m}||u_{j}||_{\dot{Y}^{s_{c}}}\\
&=||u_{1}||_{\dot{Y}^{s}}\prod_{j=2}^{m}||u_{j}||_{\dot{Y}^{s_{c}}}.
\end{split}
\]

Next, we prove the estimate for $J_{2}$. By Theorem~\ref{duality} and the Plancherel's theorem, we have
\[
\begin{split}
J_{2}&\leq
\sum_{N_{1}}\sum_{N_{2}\sim N_{1}}\left(\sum_{N_{0}}N_{0}^{2s}\left|\left|e^{-it\Delta}P_{N_{0}}\sum_{S_{*}(N_{2})}I_{T}(P_{N_{1}}u_{1},\cdots P_{N_{m}}u_{m})\right|\right|_{U^{2}}^{2}\right)^{1/2}\\
&=\sum_{N_{1}}\sum_{N_{2}\sim N_{1}}\left(\sum_{N_{0}\lesssim N_{1}}N_{0}^{2s}
\sup_{||u_{0}||_{V^{2}_{\Delta}}=1}\left| \sum_{S_{*}(N_{2})}\int_{0}^{T}\int_{\R^{d}}\left(N_{0}\prod_{j=0}^{m}P_{N_{j}}u_{j}\right)dxdt\right|^{2}\right)^{1/2}.
\end{split}
\]
Therefore by {\rm Proposition~\ref{HH_est}} and Cauchy-Schwartz inequality for dyadic sum, we have
\[
\begin{split}
J_{2}&\lesssim
\sum_{N_{1}}\sum_{N_{2}\sim N_{1}}\left(\sum_{N_{0}\lesssim N_{1}}N_{0}^{2s}
\left(\frac{N_{0}}{N_{1}}||P_{N_{1}}u_{1}||_{V^{2}_{\Delta}}N_{2}^{s_{c}}||P_{N_{2}}u_{2}||_{V^{2}_{\Delta}}\prod_{j=3}^{m}||u_{j}||_{\dot{Y}^{s_{c}}}\right)^{2}\right)^{1/2}\\
&\lesssim \left(\sum_{N_{1}}N_{1}^{2s}||P_{N_{1}}u_{1}||_{V^{2}_{\Delta}}^{2}\right)^{1/2}
\left(\sum_{N_{2}}N_{2}^{2s_{c}}||P_{N_{2}}u_{2}||_{V^{2}_{\Delta}}^{2}\right)^{1/2}\prod_{j=3}^{m}||u_{j}||_{\dot{Y}^{s_{c}}}\\
&= ||u_{1}||_{\dot{Y}^{s}}\prod_{j=2}^{m}||u_{j}||_{\dot{Y}^{s_{c}}}.
\end{split}
\]
\end{proof}
The estimates (\ref{Duam_est_2}) with $s=0$ and with $s=s_{c}$ imply the following.  
\begin{cor}\label{Duam_est_inhom}
We assume $d\geq 1$, $m\geq 3$. Then for $s\geq s_{c}$ $(=d/2-1/(m-1))$ and any $0<T\leq \infty$, we have
\[
||I_{T}(u_{1},\cdots u_{m})||_{Z^{s}}\lesssim \prod_{j=1}^{m}||u_{j}||_{Y^{s}}.
\]
\end{cor}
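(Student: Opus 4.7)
The plan is to derive the inhomogeneous estimate straight from the homogeneous estimate (\ref{Duam_est_2}) established in Proposition~\ref{Duam_est}, by exploiting the built-in splitting $||u||_{Z^s}=||u||_{\dot{Z}^0}+||u||_{\dot{Z}^s}$ and $||u||_{Y^s}=||u||_{\dot{Y}^0}+||u||_{\dot{Y}^s}$ from Definition~\ref{YZ_space}. So it suffices to control $||I_{T}(u_1,\dots,u_m)||_{\dot{Z}^0}$ and $||I_{T}(u_1,\dots,u_m)||_{\dot{Z}^s}$ separately by $\prod_j ||u_j||_{Y^s}$.

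First I would invoke (\ref{Duam_est_2}) twice: once with exponent $0$ and once with the given exponent $s$. Both applications are admissible because the hypothesis $m\geq 3$ forces $s_c=d/2-1/(m-1)\geq 0$ (with equality only in the boundary case $d=1$, $m=3$), so every exponent appearing is nonnegative and Proposition~\ref{Duam_est} applies. This produces
\[
||I_{T}(u_1,\dots,u_m)||_{\dot{Z}^{\sigma}}\lesssim \sum_{k=1}^{m}||u_k||_{\dot{Y}^{\sigma}}\prod_{j\neq k}||u_j||_{\dot{Y}^{s_c}},\qquad \sigma\in\{0,s\}.
\]

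The remaining task is to replace each factor $||u_j||_{\dot{Y}^{s_c}}$ on the right by $||u_j||_{Y^s}$. The key Littlewood--Paley interpolation I would establish is $||u||_{\dot{Y}^{s_c}}\lesssim ||u||_{Y^s}$ whenever $0\leq s_c\leq s$; one splits the defining sum $\sum_N N^{2s_c}||P_Nu||_{V^2_\Delta}^2$ at the threshold $N=1$, using $N^{2s_c}\leq 1$ for $N\leq 1$ (absorbed into $||u||_{\dot{Y}^0}^2$) and $N^{2s_c}\leq N^{2s}$ for $N>1$ (absorbed into $||u||_{\dot{Y}^s}^2$). Combined with the trivial bounds $||u||_{\dot{Y}^0},||u||_{\dot{Y}^s}\leq ||u||_{Y^s}$, summing the two homogeneous bounds for $\sigma\in\{0,s\}$ yields the claim.

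The only real subtlety is this Littlewood--Paley step, which crucially needs $s_c\geq 0$; this is precisely what the standing assumption $m\geq 3$ secures. If one tried to extend the statement to $m=2$ in low dimensions (where $s_c<0$), the low-frequency regime would escape the inhomogeneous norm and this short reduction would fail.
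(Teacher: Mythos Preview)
Your argument is correct and is precisely the intended derivation: apply (\ref{Duam_est_2}) at the two exponents $\sigma=0$ and $\sigma=s$ to control the two pieces of the $Z^{s}$-norm, then absorb every residual $\dot{Y}^{s_{c}}$ factor into $Y^{s}$ via the low/high-frequency splitting at $N=1$, which works exactly because $0\le s_{c}\le s$. This is the same approach the paper indicates in its one-line justification; note only that the paper's phrase ``with $s=0$ and with $s=s_{c}$'' is a slight slip---for general $s\ge s_{c}$ one needs (\ref{Duam_est_2}) at the given $s$, not at $s_{c}$, just as you do.
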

\begin{proof}[\rm{\bf{Proof of Theorem~\ref{wellposed_1}.}}]
We prove only the homogeneous case. The inhomogeneous case is also proved by the same way. 
For $r>0$, we define 
\begin{equation}\label{Zr_norm}
\dot{Z}^{s}_{r}(I)
:=\left\{u\in \dot{Z}^{s}(I)\left|\ ||u||_{\dot{Z}^{s}(I)}\leq 2r \right.\right\}
\end{equation}
which is a closed subset of $\dot{Z}^{s}(I)$. 
Let $u_{0}\in B_{r}(\dot{H}^{s_{c}})$ be given. For $u\in \dot{Z}^{s_{c}}_{r}([0,\infty ))$, 
we have
\[
||\Phi_{T,u_{0}}(u)||_{\dot{Z}^{s_{c}}([0,\infty ))}\leq ||u_{0}||_{\dot{H}^{s_{c}}} +C||u||_{\dot{Z}^{s_{c}}([0,\infty ))}^{m}\leq r(1+2^{m}Cr^{m-1})
\]
and
\[
\begin{split}
||\Phi_{T,u_{0}}(u)-\Phi_{T,u_{0}}(v)||_{\dot{Z}^{s_{c}}([0,\infty ))}
&\leq C(||u||_{\dot{Z}^{s_{c}}([0,\infty ))}+||v||_{\dot{Z}^{s_{c}}([0,\infty ))})^{m-1}||u-v||_{\dot{Z}^{s_{c}}([0,\infty ))}\\
&\leq 4^{m-1}Cr^{m-1}||u-v||_{\dot{Z}^{s_{c}}([0,\infty ))}
\end{split}
\]
by Proposition~\ref{Duam_est} and
\[
||e^{it\Delta}\varphi ||_{\dot{Z}^{s_{c}}([0,\infty ))}\leq ||\ee_{[0,\infty )}e^{it\Delta}\varphi ||_{\dot{Z}^{s_{c}}_{\sigma}}\leq ||\varphi ||_{\dot{H}^{s_{c}}}, 
\] 
where $C$ is an implicit constant in (\ref{Duam_est_1}). Therefore if we choose $r$ satisfying
\[
r <(4^{m-1}C)^{-1/(m-1)},
\]
then $\Phi_{T,u_{0}}$ is a contraction map on $\dot{Z}^{s_{c}}_{r}([0,\infty ))$. 
This implies the existence of the solution of (\ref{mDNLS}) and the uniqueness in the ball $\dot{Z}^{s_{c}}_{r}([0,\infty ))$. 
The Lipschitz continuously of the flow map is also proved by similar argument. 
\end{proof} 
\begin{proof}[\rm{\bf{Proof of Corollary~\ref{sccat}.}}]
We prove only the homogeneous case. The inhomogeneous case is also proved by the same way. 
By Proposition~\ref{Duam_est}, 
the global solution $u\in \dot{Z}^{s_{c}}([0,\infty ))$ of (\ref{mDNLS}) which was constructed in Theorem~\ref{wellposed_1} 
satisfies
\[
N^{s_{c}}e^{-it\Delta}P_{N}I_{\infty }(u,\cdots ,u)\in V^{2}
\]
for each $N\in 2^{\Z}$. This implies that
\[
u_{+}:=\lim_{t\rightarrow \infty}(u_{0}-e^{-it\Delta}I_{\infty}(u,\cdots ,u)(t))
\]
exists in $\dot{H}^{s_{c}}$ by Proposition~\ref{upvpprop}.\ {\rm (4)}. 
Then we obtain
\[
u-e^{it\Delta}u_{+}\rightarrow 0
\]
in $\dot{H}^{s_{c}}$ as $t\rightarrow \infty$. 
\end{proof}
\section*{acknowledgements}
The author would like to express his appreciation to Kotaro Tsugawa for many discussions and very valuable comments. 

\end{document}